\DeclareMathOperator{\supp}{supp}
\newcommand\blfootnote[1]{%
	\begingroup
	\renewcommand\thefootnote{}\footnote{#1}%
	\addtocounter{footnote}{-1}%
	\endgroup
}
\def\d{\,\mathrm{d}}
\newcommand{\sign}{\text{sgn}}
\DeclareMathOperator{\hess}{Hess}
\definecolor{lpink}{rgb}{0.96, 0.76, 0.76}
\definecolor{dpink}{rgb}{0.97, 0.51, 0.47}
\definecolor{sky}{rgb}{0.53, 0.81, 0.92}
\definecolor{salmon}{rgb}{1.0, 0.55, 0.41}
\definecolor{orman}{rgb}{0.24, 0.7, 0.44}
\definecolor{aciksari}{rgb}{0.91, 0.84, 0.42}
\definecolor{dgrey}{rgb}{0.52, 0.52, 0.51}
\def\R{\mathbb{R}}
\def\S{\mathbb{S}}
\def\1{\mathds{1}}
\let\mc=\mathcal
\def\d{\,\mathrm{d}}
\def\p{\,\partial}
\newtheorem{thm}{Theorem}[section]
\newtheorem{lem}[thm]{Lemma}
\newtheorem{prp}[thm]{Proposition}
\theoremstyle{definition}
\theoremstyle{remark}
\newtheorem{remark}[thm]{Remark}
\newtheorem{hypothesis}{Hypothesis}
\author{Josephine Evans\footnote{Warwick Mathematics Institute, University of Warwick, Zeeman Building, Coventry CV4 7AL, United Kingdom. josephine.evans@warwick.ac.uk} 
	\and Havva Yolda\c{s} \footnote{Delft Institute of Applied Mathematics, Faculty of Electrical Engineering, Mathematics and Computer Science, Delft University of Technology, Mekelweg 4, 2628CD Delft, The Netherlands. h.yoldas@tudelft.nl}}
\title{Trend to equilibrium for run and tumble equations with non-uniform tumbling kernels}
\begin{document}
	
	\maketitle
	
	\vspace{-10pt}

	\begin{abstract}
		\noindent 
		We study the long-time behaviour of a run and tumble model which is a kinetic-transport equation describing bacterial movement under the effect of a chemical stimulus. The experiments suggest that the non-uniform tumbling kernels are physically relevant ones as opposed to the uniform tumbling kernel which is widely considered in the literature to reduce the complexity of the mathematical analysis. We consider two cases: (i) the tumbling kernel depends on the angle between pre- and post-tumbling velocities, (ii) the velocity space is unbounded and the post-tumbling velocities follow the Maxwellian velocity distribution. We prove that the probability density distribution of bacteria converges to an equilibrium distribution with explicit (exponential for (i) and algebraic for (ii)) convergence rates, for any probability measure initial data. To the best of our knowledge, our results are the first results concerning the long-time behaviour of run and tumble equations with non-uniform tumbling kernels.
		
		\blfootnote{\emph{Keywords and phrases.} run and tumble equation, hypocoercivity, kinetic equations, Harris's theorem.}
		\blfootnote{\emph{2020 Mathematics Subject Classification.}  35B40, 35Q92, 37A25.}
		
	\end{abstract}
	
	\tableofcontents
	
	\section{Introduction} \label{sec:intro}
	
	We consider a kinetic-transport equation which describes the movement of biological microorganisms biased towards a chemoattractant. The model is called the \emph{run and tumble} equation and introduced in \cite {A80, S74} based on some experimental observations \cite{BB72} on the bacterium called \emph{Escherichia coli (E. coli)}. The equation is given by 
	\begin{align} \label{eq:rt}
		\begin{split}
			\partial_t f + v\cdot \nabla_x f &= \int_{\mathcal{V}} \lambda (v' \cdot \nabla_x M(x)) \kappa (v, v') f(t,x, v') \d v' - \lambda (v \cdot \nabla_x M(x)) f(t,x,v) \\
			f(0,x,v) &= f_0(x,v) 
		\end{split}
	\end{align} 
	where $f:=f (t,x,v) \geq 0$ is the density distribution of microorganisms at time $t \geq 0$ at a position $x \in \mathbb{R}^d$, moving with a velocity $v \in \mathcal{V} \subseteq \R^d$. 
	
	The term $\lambda (v' \cdot \nabla_x M(x)) \kappa (v, v')$ is called the \emph{tumbling frequency} where $\lambda : \mathbb{R} \to [0, \infty) $ is the \emph{tumbling rate}. The tumbling rate $\lambda$ depends on the gradient of the external signal $M$ along the direction of the velocity $v$, and it is defined by
	\begin{align*} 
		M (x)= m_0 + \log (S(x)),
	\end{align*} where $m_0 \in \R^+_0$ represents the external signal in the absence of a chemical stimulus and $S(x)$ is a given function for the density of the chemoattractant. 
	In Eq. \eqref{eq:rt}, the \emph{tumbling or turning kernel} $\kappa(v,v') $ is a probability distribution on the space $\mathcal{V}$ and gives the probability of moving from velocity $v$ to velocity $v'$, i.e. $\int_{\mc V} \kappa (v,v') \d v' = 1$. 
	
	In the case of peritrichous bacteria such as \emph{E. coli} and \emph{Salmonella typhimurium}, experiments conducted in \cite{BB72,M80} suggest that $\kappa$ depends only on the relative angle $\theta$ between the pre- and post-tumbling velocities $v$ and $v'$ respectively. Particularly, for bacterium \emph{E. coli}, the tumbling kernel $\kappa$ is given by 
	\begin{align*}
		\kappa(v, v') = \kappa (\theta) = \frac{g(\theta)}{2 \pi \sin \theta}, \quad  \theta = \arccos \left(\frac{v \cdot v'}{|v||v'|}\right), 
	\end{align*} where $g(\theta)$ is a sixth order polynomial satisfying $g(0) = g(\pi) =0$ (see \cite{OH02, BFSC96} for more details). The exact form of $g$ is provided in \cite{FFC93} by polynomial fitting to the data of \cite{BB72}. Here we will work in a bounded velocity space.
	
	In our previous paper \cite{EY21} we studied this equation under the assumption that $\kappa$ was uniformly bounded above and below. However, this assumption is not realistic as the bacteria are not able to turn a full half circle. We aim to extend our previous work to the case where the maximum turning angle of the bacteria may be bounded.
	
	We are also interested in unbounded velocity spaces, i.e., $v \in \mc V  =\R^d$. In this setting, we consider that the tumbling kernel is given by the Maxwellian distribution on the post-tumbling velocities independently from the pre-tumbling velocities, i.e.,
		\begin{align*}
			\kappa (v,v') = \mc M (v) = \frac{1}{(2\pi)^\frac{d}{2}} e^{\frac{-|v|^2}{2}}. 
	\end{align*}
	To the best of our knowledge, we provide the first results concerning the long-time behaviour of the run and tumble equation with these non-uniform tumbling kernels which are more physically relevant in terms of modelling the chemotactic bacterial motion. We believe the main reason for this is the fact that the classical hypocoercivity techniques such as \cite{DMS15} cannot be used for the run and tumble equation even though it is a linear, conservative kinetic equation. On the other hand, Harris-type theorems (see e.g. \cite{CM21, Y22}) proved very effective for obtaining quantitative hypocoercivity results, especially for kinetic equations arising from applied sciences where classical techniques provide limited results. We elaborated on this fact in our previous paper \cite{EY21} in detail and a brief explanation can be found below in the paragraph \emph{Motivation and novelty}.
	\paragraph{Summary of previous results.} Previous important works on the linear run and tumble equation include \cite{OH00, OH02, CRS15, MW17, C19}. In \cite{OH00,OH02}, the authors study the diffusion approximation to a linear run and tumble equation and the diffusion limit of this equation to obtain macroscopic chemotaxis equations, respectively. Using the $L^2$ hypocoercivity techniques developed in \cite{DMS15}, the authors \cite{CRS15} show the existence of a unique equilibrium and exponential decay towards it in dimension $d=1$. This paper works with the assumption that the tumbling rate $\lambda$ can take two values depending if the bacteria is travelling up or down the gradient of the chemoattractant density. Though it is expressed differently in \cite{CRS15} the tumbling rate can be written as
	\begin{align*}  
		\lambda (x,v,v') = 1 + \chi \sign (x \cdot v), \quad  \chi \in (0,1),
	\end{align*} where $\chi$ is called the chemotactic sensitivity. In  \cite{MW17}, the authors extended this results to higher dimensions $d\geq1$ considering
	\begin{align*} 
		\lambda  (x,v,v') = 1 -\chi \sign (\p_t S + v \cdot \nabla_x S), \quad \chi  \in  (0,1). 
	\end{align*} The result in \cite{MW17} works under the assumption that the concentration of the chemoattractant $S(x)$ is radially symmetric and decreasing in $x$ such that $S(x) \to 0$ as $|x| \to \infty$. In our previous paper, \cite{EY21} we improve the result in \cite{MW17} by proving the exponential convergence to unique equilibrium without requiring $S(x)$ to be radially symmetric. Our result is valid in an arbitrary dimension $d \geq 1$ where the tumbling kernel $\lambda$ can take a much more general form. Most importantly, we show that existence and convergence to a steady state hold when $\lambda$ is a Lipschitz function. Our techniques in \cite{EY21} are based on Harris-type theorems coming from the ergodic theory of Markov processes. They differ from the techniques used in \cite{MW17} that are based on the Krein-Rutman theorem. Moreover in \cite{EY21}, we also consider a non-linear run and tumble model where we can prove the exponential decay to a unique equilibrium. In \cite{C19}, the author studies a non-linear coupled run and tumble equation in one dimension $d=1$. Even though \cite{C19} is mainly concerned with the travelling wave solutions of the non-linear equation, as an intermediate step, the author shows the existence of steady states for the linear equation.
	
	All these previous works deal with situations where the tumbling kernel, which tells us how the post-tumbling velocities depend on the pre-tumbling velocities, is essentially uniform, i.e., $\kappa \equiv 1$, and on a bounded velocity space. The main goal of this work is to look at physically more realistic cases where the tumbling kernel is not uniform and the bacteria are not able to turn to every angle in one tumbling event.

	\paragraph{Motivation and novelty.}
	In our previous paper \cite{EY21} we studied Eq. \eqref{eq:rt} in the case where the velocity space was a ball and the post-tumbling velocities were uniformly distributed so that $\kappa \equiv 1$. We showed exponential convergence to equilibrium in suitable weighted total variation distances. The result was built around probabilistic techniques called Harris-type theorems coming from the theory of Markov processes. The goal of this paper is to extend the previous result to a wider class of tumbling kernels that are non-uniform and physically more relevant for modelling the motion of chemotactic bacteria.
	
	The typical tools for showing convergence to equilibrium for kinetic equations come from \emph{hypocoercivity}. It is important to note that "classical" hypocoercivity techniques such as \cite{V09, DMS15} cannot be applied without strong a priori knowledge of the steady state which we do not have for the run and tumble equation. This is the main reason for scarce results on the long-time behaviour with arbitrary dimensions $d \geq 1$ even though the run and tumble equation is widely studied. Harris-type theorems (see, e.g., \cite{CM21,H16,HM11} and references therein) provide the existence and uniqueness of the steady state as a by-product while simultaneously showing convergence to equilibrium. Using these tools, one can obtain quantitative hypocoercivity results in weighted total variation (or in weighted $L^1$) distances independently from the initial data (see, e.g., \cite{CCEY20, Y22}).
	
	Harris-type theorems are based on verifying two hypotheses: minorisation and geometric drift conditions. The minorisation condition requires providing a quantifiable positive lower bound on the process. Therefore, non-uniform tumbling kernels pose additional challenges compared to the case with a uniform tumbling kernel when using Harris-type theorems. This is because we need to prove lower bounds on the law uniform over a large set of initial conditions. This means that we need to find some possible paths a bacterium can take when moving from one point in phase space to another. The fact that the tumbling angle is bounded means that bacteria may only be able to travel between two points along paths involving many tumbling events. Tracking bounds on the probabilities of these complex paths (and paths close to them) is challenging and required us to generate new technical tools. 
	
	\subsection{Assumptions and main results} \label{sec:assumptions}
	The two main results of our paper concern angularly dependent tumbling kernels and unbounded velocity spaces. In both cases, we will make the following assumptions on the tumbling rate, $\lambda$, and the logarithm of the chemoattractant concentration, $M$.
	
	\begin{itemize}
		\item[\bf(H1)] \label{hyp:lambda}
		The tumbling rate $\lambda(m): \mathbb{R} \to (0, \infty)$ is a function of the form
		\begin{align} \label{asm:lambda} \tag{$\lambda$}
			\lambda(m) = 1- \chi\psi(m),  \quad \chi \in (0,1)
		\end{align} where $\psi$ is a bounded (with $\|\psi\|_\infty \leq 1$), odd, increasing function and $m\psi(m) \in W^{1, \infty}_{loc}(\mathbb{R})$. 
		\item[\bf(H2)] \label{hyp:int_bound}
		There exists a contstant a strictly positive integer $b>0$, such that for every $B>0$, there exists $c > 0$ depending on $B$ so that 
		\begin{align}\label{asm:m_psi_m}  \tag{$m$}
			m \psi(m) \geq c|m|^b  
		\end{align} for $|m| \leq B$. We note that this holds if $\psi$ is the sign function or if it is odd and differentiable around zero with strictly positive $k^{th}$ derivative for some $k \geq 1$. 
		\item[\bf(H3)] \label{hyp:M}
		We suppose that  $M(x) \to-\infty$  as $|x| \to \infty$, $|\nabla_x M(x)|$ is bounded and that there exist $R\geq 0$ and $m_* >0$ such that whenever $|x|>R$ we have
		\begin{align} \label{asm:M} \tag{$M$}
			|\nabla_x M(x)| \geq m_*.
		\end{align} 
		Moreover, we suppose that $\hess (M)(x) \to 0$ as $|x|\to  \infty$ and  $|\hess(M) (x)|$ is bounded.
	\end{itemize}
	
	The following assumptions concern the tumbling kernels we work with. 
	\begin{itemize}
		\item[\bf(H4)] \label{hyp:K_1} We assume that $\mathcal{V} = V_0 \mathbb{S}^{d-1}$ and there exist $\alpha, \beta >0$ such that
		\begin{align*}\label{asm:K_1} \tag{$\kappa_1$}
			\kappa(v,v') = \kappa_1(\theta) \geq \beta \mathds{1}_{|\theta| < \alpha} (\theta),
		\end{align*} where  $\theta = \arccos \left( \dfrac{v \cdot v'}{V_0^2} \right )$  and $\kappa_1$ is a decreasing function of $|\theta|$ (similar arguments work if $\kappa_1$ is even and bounded below by a decreasing function of $|\theta|$).
		\item[\bf{(H5)}] 	\label{hyp:K_2} We assume that $\mathcal{V} = \R^d$ and the tumbling kernel is given by the Maxwellian distribution on the post-tumbling velocities independently from the pre-tumbling velocities, 
			\begin{align} \tag{$\kappa_2$}
				\label{eq:K_Maxwell}
				\kappa(v, v') = \kappa_2  (v) = \frac{1}{(2 \pi)^{\frac{d}{2}}}e^{-\frac{|v|^2}{2}}.
		\end{align}
		
		After the assumptions, we state the main results of the paper below.
	\end{itemize}
	\begin{thm}[Angularly dependent tumbling kernel \eqref{asm:K_1}] 
		\label{thm:angulardep}
		Suppose that $ t \mapsto f_t $ is the solution of Equation \eqref{eq:rt} with initial data $f_0 \in \mathcal{P} (\R^2 \times \mathbb{S}^1 )$. We suppose that hypotheses 
		\textbf{\em (H1), (H2), (H3)} and \textbf{\em (H4)} are satisfied. Then there exist positive constants $C, \sigma$ (independent of $f_0$) such that 
		\begin{align} \label{eq:thm1}
			\|f_t - f_{\infty} \|_* \leq C e^{-\sigma t} \| f_0 -f_{\infty}\|_*, 
		\end{align} where $f_{\infty}$ is the unique steady state solution to Equation \eqref{eq:rt}. The norm $\|\cdot\|_*$ is the weighted total variation norm defined by 
		\begin{align} \label{eq:norm1}
			\|\mu\|_*  := \int_{\R^d} \int_{\S^{d-1}}  \Big ( 1 - \frac{\gamma}{1-C_\kappa} v\cdot \nabla_x M(x) - \gamma A v\cdot \nabla_x M (x)\psi(v \cdot \nabla_x M(x)) \Big )e^{-\gamma M(x)}  |\mu| \d v \d x,
		\end{align} where $\gamma, A, C_\kappa > 0$ are positive constants that can be computed explicitly and will be chosen so that $\|\cdot\|_*$ is indeed a norm, and $\mu$ is a finite measure.
	\end{thm}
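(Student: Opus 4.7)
The plan is to apply a Harris-type theorem to the Markov semigroup $(\mathcal{S}_t)_{t\geq 0}$ associated with \eqref{eq:rt}, following the overall strategy of \cite{EY21} but with two genuinely new ingredients adapted to the angularly restricted kernel $\kappa_1$. Concretely, I need to produce (i) a Foster-Lyapunov inequality $\mathcal{L}^* V \leq -\zeta V + K \mathbf{1}_{\mc K}$ for a Lyapunov function $V$ on $\R^2\times\S^1$ and a compact set $\mc K$, and (ii) a minorisation $\mc S_T \delta_{(x_0,v_0)} \geq \eta\,\nu$ valid uniformly for $(x_0,v_0)$ in a sub-level set of $V$ that contains $\mc K$, with $\nu$ a fixed nontrivial measure. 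With these in hand, the quantitative Harris theorem yields a unique invariant probability $f_\infty$ and the contraction \eqref{eq:thm1}; the Lyapunov function $V$ will be precisely the weight appearing in $\|\cdot\|_*$.

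For the drift step I take $V(x,v) = \bigl(1 - \tfrac{\gamma}{1-C_\kappa}\, v\cdot\na_x M(x) - \gamma A\, v\cdot\na_x M(x)\, \psi(v\cdot\na_x M(x))\bigr) e^{-\gamma M(x)}$ and compute $\mc L^* V$ by hand, using the transport term to extract $-\gamma\, v\cdot\na_x M\cdot V$ and the tumbling gain/loss to generate compensating contributions. The role of the extra term proportional to $v\cdot\na_x M\,\psi(v\cdot\na_x M)$ is to absorb the nonlinear part of $\lambda(m) = 1 - \chi\psi(m)$: hypothesis \textbf{(H2)} guarantees $m\psi(m) \geq c|m|^b$, which keeps this correction nonnegative and under control. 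Outside a large ball, hypothesis \textbf{(H3)} gives $|\na_x M|\geq m_*$ and $\hess(M)\to 0$, so $\gamma, A, C_\kappa>0$ can be tuned to yield $\mc L^* V \leq -\zeta V$ for $|x|$ large and $\mc L^* V \leq K$ inside. The same choice of constants ensures $V\geq 1$, so that $\|\cdot\|_*$ is genuinely a norm.

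The main obstacle will be the minorisation. In \cite{EY21} this step was almost immediate because $\kappa\equiv 1$ allows the bacterium to tumble to any angle in a single event, so one tumble plus free transport suffices to connect any $(x_0,v_0)$ to any $(x_1,v_1)$. Under \eqref{asm:K_1} the kernel is only bounded below on $|\theta|<\alpha$, so to change direction significantly the process must concatenate at least $\lceil \pi/\alpha\rceil$ tumbles with each angular increment in $(-\alpha,\alpha)$. I plan to construct, for each $(x_0,v_0)$ in a prescribed sub-level set of $V$, a canonical broken path consisting of $n$ straight segments separated by tumbles at times $0<t_1<\dots<t_n<T$ with intermediate velocities $v_1,\dots,v_n$ chosen to steer the trajectory to a fixed target $(x^*,v^*)$, and then to integrate the Duhamel representation of $\mc S_T\delta_{(x_0,v_0)}$ over a small neighbourhood of those parameters. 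The technical heart is a Jacobian computation showing that the map $(t_i,v_i)_{i\leq n}\mapsto (X_T,V_T)$ is non-degenerate and bounded below uniformly in $(x_0,v_0)$; combined with the lower bounds $\kappa_1\geq\beta$ on $|\theta|<\alpha$, $\lambda\geq 1-\chi>0$, and control of the survival factor $\exp\!\bigl(-\int_0^T\lambda\,\d s\bigr)$, this produces the desired uniform estimate $\mc S_T\delta_{(x_0,v_0)} \geq \eta\,\mathrm{Leb}|_{B(x^*,\rho)\times B(v^*,\rho)}$.

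With the drift and minorisation in place, the version of Harris's theorem recalled in \cite{EY21} applied to the time-$t_0$ semigroup gives a strict contraction $\|\mc S_{t_0}\mu - \mc S_{t_0}\nu\|_* \leq \rho\|\mu-\nu\|_*$ with $\rho\in(0,1)$; existence and uniqueness of the invariant measure $f_\infty$ and the exponential decay \eqref{eq:thm1} then follow by semigroup iteration and interpolation to arbitrary $t\geq 0$.
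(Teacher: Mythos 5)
Your overall architecture is the same as ours: Harris's theorem applied with the Lyapunov weight equal to the weight in $\|\cdot\|_*$, plus a multi-jump minorisation forced by the angular restriction in \eqref{asm:K_1}. The Foster--Lyapunov part of your plan matches Proposition \ref{prp:lyapunovangle} essentially line for line (including the role of \textbf{(H2)} via Lemma \ref{lem:FL12} and the tuning of $\gamma$ and $A$ using \textbf{(H3)}), so there is nothing to add there.

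The minorisation is where your route diverges in execution, and where your sketch leaves the hardest step unproven. You propose to fix, for each $(x_0,v_0)$, one canonical $n$-segment steering path to a fixed target and to show that the map $(t_i,v_i)_{i\le n}\mapsto (X_T,V_T)$ has a Jacobian bounded below uniformly in $(x_0,v_0)$. Two concrete difficulties are buried in that sentence. First, the number of tumbles and the geometry of the steering path necessarily depend on $(x_0,v_0)$ (both on the distance to the target and on the initial angle), so "uniformly non-degenerate Jacobian" is not a single computation but a family of them, and the minorising measure must come out independent of the initial point. Second, reaching a full neighbourhood of target \emph{velocities} is a separate problem from reaching a neighbourhood of target positions: with $\kappa_1$ supported in $|\theta|<\alpha$ you must append $O(\pi/\alpha)$ extra tumbles in a short time window to sweep all angles without drifting spatially out of the ball you have just gained. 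Our proof handles exactly these two points by a different decomposition: Lemma \ref{lem:crecent} shows that a fixed three-jump block $P^3$ maps a Dirac onto a measure dominating the indicator of a computable ball; Lemmas \ref{lem:crecent_ball}--\ref{lem:spacebound} iterate this block, showing the balls grow linearly in radius while their centres move along a fixed circle, so after $\lceil 24\hat R/r\rceil$ jumps the spatial marginal dominates $\1_{B((x_0,y_0);\hat R)}$ with a constant independent of the starting point; Lemma \ref{lem:thetabound} then performs the angular sweep with $n_*=\lceil 4\pi/\alpha\rceil$ quick jumps. If you want to pursue your single-Jacobian version you must either restrict to a two-jump change of variables (which is what our Jacobian $s_2s_3\sin\theta_2$ in Lemma \ref{lem:crecent} actually is) and then iterate as we do, or supply the uniformity argument for the full $2n$-dimensional map; as written, the claim that this map is "non-degenerate and bounded below uniformly in $(x_0,v_0)$" is precisely the content that needs proof, not an input to it.
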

	\begin{remark} We believe that Theorem \ref{thm:angulardep} works in arbitrary dimension $d \geq 1$. The reason for stating Theorem \ref{thm:angulardep} in dimension $d=2$ is that we provide the proof of Proposition \ref{prop:minorisation_angle_dependent} only in $d=2$. We do not believe that there is a major mathematical obstacle in proving it in higher dimensions. However, even in $d=2$, the computations become delicate, and the notations get intricate. Therefore, we decided to provide it in $d=2$ to keep the exposition of our ideas clear.
	\end{remark}
	
	\begin{thm}[Unbounded velocity space with tumbling kernel \eqref{eq:K_Maxwell}] \label{thm:unbounded}
		Suppose that $ t \mapsto f_t$ is the solution of Equation \eqref{eq:rt} with initial data $f_0 \in \mathcal{P}( \R^d \times \R^d)$. We suppose that hypotheses \textbf{\em (H1), (H2), (H3)} and \textbf{\em (H5)} are satisfied and assume further that $M \hess(M)$ is bounded. Then there exists a positive constant $C>0$ such that 
		\begin{align}\label{eq:thm2}
			\|f_t - f_\infty\|_{TV} \leq C t^{-1} M_{f_0}
		\end{align} where 
			\begin{align} \label{eq:norm2}
				M_{f_0} :=  \int_{\R^d} \int_{\R^d} f_0(x,v) \Big (1 + M(x)^2 + 2  v \cdot \nabla_x M(x) M (x)\Big ( 1+ \frac{\chi}{1+\chi}\psi \left ( v \cdot \nabla_x M (x) \Big  )\Big ) + Av^2 \right ) \d v \d x,
			\end{align} with $A >0$ is a constant that can be computed explicitly and it is sufficiently large so that $M_{f_0}>0$.
	\end{thm}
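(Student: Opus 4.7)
The plan is to deduce Theorem \ref{thm:unbounded} from a sub-geometric version of Harris's theorem (e.g. Douc--Fort--Guillin or Hairer's notes on convergence of Markov processes): if a Markov semigroup with generator $\mathcal{L}$ admits a minorisation on compact sub-level sets of a Lyapunov function $V\geq 1$ together with the \emph{additive} Foster--Lyapunov inequality $\mathcal{L}V \leq -c + K\mathbf{1}_{K_0}$ on a compact set $K_0$, then $\|P_t\mu - \pi\|_{TV} \leq \int V\,d\mu/(ct)$. The weight inside $M_{f_0}$ in \eqref{eq:norm2} is exactly the candidate Lyapunov function
\[
V(x,v) \;=\; 1 + M(x)^2 + 2\,v\cdot\nabla_x M(x)\,M(x)\Big(1+\tfrac{\chi}{1+\chi}\psi(v\cdot\nabla_x M(x))\Big) + A v^2 ,
\]
with $A$ large enough that, using $|\nabla_x M|$ bounded and Young's inequality, $V(x,v)\gtrsim 1+M(x)^2+v^2$.

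The main step is the drift computation. Dualising \eqref{eq:rt} one finds
\[
\mathcal{L}\phi(x,v) \;=\; v\cdot\nabla_x\phi(x,v) + \lambda(v\cdot\nabla_x M)\big(\langle\phi\rangle_{\mathcal{M}}(x) - \phi(x,v)\big), \qquad \langle\phi\rangle_{\mathcal{M}}(x) = \int_{\R^d}\mathcal{M}(v')\phi(x,v')\,dv'.
\]
Term by term: $\mathcal{L}(M^2) = 2 M\,v\cdot\nabla_x M$ (the collision part vanishes as $M^2$ is $v$-independent); $\mathcal{L}(Av^2) = A\lambda(v\cdot\nabla_x M)\big(d - v^2\big)$, which is strictly dissipative for large $|v|$; and the cross term, after expanding $\lambda = 1-\chi\psi$, produces a collision contribution $-2\lambda(v\cdot\nabla_x M)\,M\big(1+\tfrac{\chi}{1+\chi}\psi\big)$ whose leading piece $-2M\,v\cdot\nabla_x M$ exactly cancels the bad term from $\mathcal{L}(M^2)$ — this cancellation is the purpose of the precise coefficient $\chi/(1+\chi)$ inside the $\psi$-correction. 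The remaining terms are of three types: (a) $C(v\cdot\nabla_x M)^2$, absorbed by $-A\lambda v^2$ once $A$ is large (since $|\nabla_x M|$ is bounded); (b) $C\,M\,v^T\hess(M)\,v$, controlled by the hypothesis that $M\hess(M)$ is bounded and again absorbed into $-Av^2$; and (c) bounded lower-order terms from $\nabla_x M$ and $\lambda$. Outside $\{|x|\leq R\}$, hypothesis \textbf{(H3)} gives $|\nabla_x M|\geq m_*$ and the surviving negative contributions deliver $\mathcal{L}V \leq -c$; inside $\{|x|\leq R\}$ all quantities are bounded, yielding the desired additive drift $\mathcal{L}V \leq -c + K\mathbf{1}_{\{|x|\leq R\}}$. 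Consequently $\tfrac{d}{dt}\int V f_t \leq -c + K\int_{\{|x|\leq R\}}f_t \leq -c + K$, which is sufficient to iterate into the sub-geometric framework.

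For the minorisation on compact sub-level sets of $V$ the Maxwellian tumbling kernel is a genuine simplification compared to Theorem \ref{thm:angulardep}: in a single tumbling event the post-tumbling velocity has density $\mathcal{M}$ regardless of the pre-tumbling velocity and position. Given a compact phase-space set $K_0$ and a small target box $K_1$, I would construct two-tumble paths: the first tumble has probability bounded below on a short time interval (since $\lambda \geq 1-\chi>0$ uniformly), after which the velocity distribution is exactly $\mathcal{M}$, which is uniformly positive on any compact velocity set; the time between tumbles is tuned to steer the position into the projection of $K_1$, and a second tumble with velocity in a narrow Maxwellian slice steers the trajectory into $K_1$. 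This produces a Doeblin bound $P_T((x,v),\cdot)\geq \rho\,\nu$ uniformly in $(x,v)\in K_0$ with $\nu$ a probability measure on $K_1$.

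The main obstacle is the drift calculation, specifically showing that \emph{all} the positive remainders after the cancellation in $\mathcal{L}V$ can be absorbed simultaneously by the single dissipative term $-A\lambda v^2$, and that what remains is truly a \emph{constant} upper bound (not merely a negative function growing slower than $V$). It is this constant drift that produces the exact rate $t^{-1}$ of the theorem, as opposed to the weaker $t^{-\alpha}$ one would get from a concave sub-linear drift $-\phi(V)$; the assumption $M\hess(M)$ bounded is what buys this, by preventing the Hessian contribution from scaling like $|M|\,v^2$. Uniqueness of $f_\infty$ (and hence the equality $\pi = f_\infty$) comes for free from the combined minorisation and drift as in the standard Harris framework.
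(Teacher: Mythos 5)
There is a genuine gap in the reduction to Harris-type theory. You claim that the \emph{additive} drift condition $\mathcal{L}V \leq -c + K\mathbf{1}_{K_0}$, combined with minorisation, yields $\|P_t\mu-\pi\|_{TV}\leq \int V\,d\mu/(ct)$. This implication is false: a constant negative drift outside a small set only controls the \emph{first} moment of the return time to that set ($\mathbb{E}_z[\tau_{K_0}]\lesssim V(z)/c$), which in the Tuominen--Tweedie/Douc--Fort--Guillin framework gives ergodicity ($\|P_t\mu-\pi\|_{TV}\to 0$) but no quantitative rate. In the notation of Theorem \ref{thm:subgeo}, taking $h$ constant gives $h\circ H_h^{-1}(t)\equiv$ const, so the second term in the conclusion does not decay. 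To obtain $t^{-1}$ one needs the strictly stronger drift $\mathcal{L}^*[\phi]\leq C-\Lambda\sqrt{\phi}$, i.e.\ $h(u)=\sqrt{u}$, for which $H_h^{-1}(t)\sim t^2$ and $h\circ H_h^{-1}(t)\sim t$. Your closing remark inverts this logic: the concave sub-linear drift $-\sqrt{V}$ is not a weaker alternative to the constant drift; it is exactly what produces the $t^{-1}$ rate, and the constant drift is the degenerate case that produces nothing.

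The irony is that your own term-by-term computation essentially delivers the needed bound and you then discard it. After the cancellation of $2M\,v\cdot\nabla_x M$ and absorption of the quadratic and Hessian remainders into $-A(1-\chi)|v|^2$, the surviving negative contributions are $-(1-\chi)|v|^2$ together with a term of the form $-c(1-\chi)\,|M|\int\kappa_2(v')\,v'\cdot\nabla_xM\,\psi(v'\cdot\nabla_xM)\,dv'\lesssim -c'|M|$ coming from the $\tfrac{\chi}{1+\chi}\psi$-correction (this is the key dissipation in $M$, which your sketch does not isolate). Since $\phi\sim M^2+|v|^2$ for $A$ large, one has $|M|+|v|^2\gtrsim\sqrt{\phi}-C$, whence $\mathcal{L}^*[\phi]\leq C-\Lambda\sqrt{\phi}$; this is the inequality the paper proves and feeds into Theorem \ref{thm:subgeo}. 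The reason the rate is only $t^{-1}$ rather than exponential is precisely that the equation dissipates $|M|$ linearly while the Lyapunov function must contain $M^2$; the boundedness of $M\hess(M)$ is needed to keep the Hessian remainder of size $O(|v|^2)$, not to upgrade a concave drift to a constant one. Your minorisation sketch, by contrast, is sound and matches the paper's (two tumbling events, with the Maxwellian bounded below on compact velocity sets).
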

	
	Even though we study the long-time behaviour of the run and tumble equation \eqref{eq:rt} with the non-uniform tumbling kernels in this paper, we would like to briefly comment on the Cauchy theory for these equations. 
	\paragraph{Cauchy theory for Equation\eqref{eq:rt}.}  As Eq. \eqref{eq:rt} is a linear integro-differential equation with bounded coefficients, showing the existence and uniqueness of  global-in-time, measure-valued solutions is relatively standard. One could either use Picard iteration arguments to construct short-time solutions and then use the fact that $\lambda$ is bounded to show these can be glued together globally in time or one can directly write down a Markov process whose law satisfies Eq. \eqref{eq:rt}. We briefly explain how to do the latter. Let us generate a Poisson process with intensity $(1+\chi)$ and call its jump times $J_1, J_2, \dots$ and a series of thinning variables $U_1, U_2, \dots$ independent and all having the uniform law on $[0, 1+ \chi]$ then we define initial points $(X_0, V_0)$ having law $f_0$ then set $J_0 = 0$ and for $t \in (J_i, J_{i+1})$ we write $X_t = X_{J_i} + (t-J_i)V_{J_i}$ and $V_t = V_{J_i}$ then for $t = J_{i+1}$ we set $X_t = X_{J_i} + (J_{i+1}- J_i)V_{J_i}$ then if $U_{i+1} \leq \lambda(V_{J_i}\nabla_x M(X_t))$ we generate $V_{J_{i+1}}$ as a new random variable having law $\kappa(V_{J_i}, \cdot)$ and if $U_{i+1} > \lambda(V_{J_i} \nabla_x M(X_t))$ we set $V_{J_{i+1}} = V_{J_i}$. 
	
	\paragraph{Plan of the paper.} After listing our assumptions and stating our main results in Section \ref{sec:assumptions}, in the following section (Section \ref{sec:harris}), we describe our methodology, particularly we state Harris's theorem in geometric and subgeometric settings, Theorems \ref{thm:harris} and \ref{thm:subgeo} respectively. Then,  Sections \ref{sec:angle-dependent} and \ref{sec:unbounded-velocities} are dedicated to proving Theorems \ref{thm:angulardep} and \ref{thm:unbounded} for the cases (i) angularly dependent tumbling kernels and (ii) unbounded velocity spaces respectively.

	\subsection{Methodology}  \label{sec:harris}
	This paper is an extension of the work in \cite{EY21} and as such is built on Harris's theorem from Markov process theory. More precisely, Harris-type theorems are used in showing geometric (exponential) or sub-geometric (algebraic) rates of convergence to a unique equilibrium solution for equations that can be described as Markov processes. Harris-type theorems rely on verifying two hypotheses: a Foster-Lyapunov condition and a uniform minorisation condition. 
	
	We briefly introduce the functional setting and some notations below and then we provide the statements of the theorems both in geometric and sub-geometric settings in the spirit of \cite{HM11,MS16,CM21} and the references therein.  We skip the proofs of these theorems.
	
	\paragraph{Notations.} We consider a measurable space $(\Omega, \Sigma)$ where $\Omega$ is a Polish space endowed with a probability measure. We denote the space of probability measures on $\Omega$ by $\mc P (\Omega)$. Note that in our setting $\Omega = \R^d \times \mc V$ so that $(x, v) \in \R^d \times \mc V = \Omega$. We sometimes use the notation $z := (x,v)$. 
	
	We define the weighted total variation (or weighted $L^1$) distance by
	\begin{align*}
		\|\mu\|_\phi :=  \int_\Omega \phi(z) |\mu| (\d z), 
	\end{align*} where $\mu$ is a finite measure or a measurable function and $\phi: \Omega \to [1, +\infty) $ is a measurable weight function. 
	
	We call $(S_t)_{t \geq 0}$ a Markov (or stochastic) semigroup if it is a linear semigroup conserving mass and positivity. Remark that if $f$ solves Eq. \eqref{eq:rt}, then $f(t,x,v) = S_tf_0(x,v)$ and  $(S_t)_{t \geq 0}$ is a Markov semigroup since Eq. \eqref{eq:rt} is positivity and mass-preserving.
	Moreover, we also use the notation $\p_t f = \mc L [f]$ equivalently to Eq. \eqref{eq:rt}. If Eq. \eqref{eq:rt} has a stationary soluton $f_\infty$, this means that $\p_t f_\infty =\mc L [f_\infty] = 0 $ and $f_\infty$ is an invariant measure for the semigroup $S_t$, i.e., $S_t f_\infty = f_\infty$.

	\begin{thm}[Harris's Theorem] \label{thm:harris}
		Let $(S_t)_{t \geq 0}$ be a Markov semigroup satisfying the following hypotheses:
		\begin{hypothesis} [Foster-Lyapunov condition]\label{hyp:FL1} There exist positive constants $\zeta, D$ and a continuous, measurable function $\phi : \Omega \to [1, +\infty)$ such that 
			\begin{align}
				\label{FL1} \tag{FL1}
				\mc L^* [\phi] (z) \leq D -\zeta \phi(z).
			\end{align}
		\end{hypothesis}
		\begin{hypothesis}[Minorisation condition]\label{hyp:M1}
			There exist a probability measure $\nu$, a constant
			$\beta \in (0,1)$ and some time $\tau >0 $ such that
			\begin{align}
				\label{M1} \tag{M1}
				S_\tau \delta_z \geq \beta \nu, \quad \mbox{for all } z \in \mathcal C,
			\end{align} where $\mathcal{C} =  \{ z \colon \phi(z) \leq R \}$ for some $R >  \frac{2D (1-e^{-\zeta \tau})}{\zeta (1-\alpha)}$.
		\end{hypothesis}
		Then $(S_t)_{t \geq 0}$ has a unique invariant measure $\mu_\infty$ and for any $\mu \in \mc P (\Omega)$ there exist some constants $C>1, \sigma >0$ such that for all $t \geq 0$ we have
		\begin{align}
			\|S_t \mu - \mu_\infty\|_\phi \leq C e^{-\sigma t} \|\mu - \mu_\infty\|_\phi.
		\end{align}
	\end{thm}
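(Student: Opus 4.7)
\textbf{Proof strategy for Theorem \ref{thm:harris}.} The plan is to follow the coupling/contraction approach of Hairer and Mattingly \cite{HM11}: combine \eqref{FL1} and \eqref{M1} into a single strict contraction of $S_\tau$ in a weighted Kantorovich--Rubinstein semimetric equivalent to $\|\cdot\|_\phi$, and then iterate to obtain exponential convergence in continuous time.

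Concretely, for a small parameter $\varepsilon>0$ to be chosen, I would introduce on $\Omega$ the distance-like function
\begin{equation*}
    d_\varepsilon(z_1, z_2) = \mathbf{1}_{z_1 \neq z_2}\bigl( 2 + \varepsilon \phi(z_1) + \varepsilon \phi(z_2)\bigr),
\end{equation*}
together with the induced Wasserstein-type semimetric $\rho_\varepsilon$ on $\mc P(\Omega)$, defined as the infimum of $\int d_\varepsilon\,\mathrm d\pi$ over couplings $\pi$ of two probability measures. A Kantorovich--Rubinstein duality argument shows that $\rho_\varepsilon$ is equivalent, up to multiplicative constants depending only on $\varepsilon$, to the weighted total variation $\|\mu_1-\mu_2\|_\phi$. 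The heart of the argument is then to exhibit $\varepsilon>0$ and $\alpha\in (0,1)$ such that
\begin{equation*}
    \rho_\varepsilon(S_\tau\mu_1, S_\tau\mu_2)\leq \alpha\,\rho_\varepsilon(\mu_1,\mu_2),
\end{equation*}
which by convexity reduces to the same inequality on pairs of Dirac masses $\delta_{z_1},\delta_{z_2}$.

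Two regimes need to be analysed. If both $z_1,z_2\in \mathcal C$, the minorisation \eqref{M1} gives the common lower bound $\beta\nu$ on $S_\tau\delta_{z_i}$; coupling these shared parts identically and the residuals arbitrarily contracts the constant $2$ in $d_\varepsilon$ by a factor $(1-\beta)$, while the Grönwall-type consequence of \eqref{FL1},
\begin{equation*}
    \mathbb E[\phi(Z_\tau)\mid Z_0=z]\leq e^{-\zeta\tau}\phi(z) + \tfrac{D}{\zeta}(1-e^{-\zeta\tau}),
\end{equation*}
controls the $\varepsilon\phi$ terms. If at least one of the $z_i$ lies outside $\mathcal C$, the minorisation is not invoked: $\varepsilon(\phi(z_1)+\phi(z_2))$ dominates the constant $2$ in $d_\varepsilon$ and contracts by a factor close to $e^{-\zeta\tau}$ under the Lyapunov bound, with the quantitative threshold $R>2D(1-e^{-\zeta\tau})/(\zeta(1-\alpha))$ in Hypothesis \ref{hyp:M1} precisely ensuring that the additive constant coming from the drift does not destroy this contraction. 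Tuning $\varepsilon$ between the two regimes yields a single common rate $\alpha<1$.

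Once contraction of $S_\tau$ is established, existence and uniqueness of $\mu_\infty$ follow from the Banach fixed-point theorem applied to the complete space of probability measures with finite $\phi$-moment endowed with $\rho_\varepsilon$, and the estimate $\|S_t\mu-\mu_\infty\|_\phi\leq Ce^{-\sigma t}\|\mu-\mu_\infty\|_\phi$ is obtained by iterating at times $n\tau$ and interpolating via the continuity bound $\|S_t\mu-S_t\nu\|_\phi\lesssim \|\mu-\nu\|_\phi$ for $t\in [0,\tau]$. The main delicacy, as is classical, is the construction and calibration of the semimetric $d_\varepsilon$: $\varepsilon$ must be tuned so that the loss $(1-\beta)$ from the minorisation inside $\mathcal C$ and the gain $e^{-\zeta\tau}$ from the Lyapunov drift outside $\mathcal C$ combine into a common contraction rate, which is exactly why the explicit threshold on $R$ appears in the hypothesis. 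Since this argument is by now standard, with complete treatments in \cite{HM11, MS16, CM21}, no new ingredient beyond careful bookkeeping is required.
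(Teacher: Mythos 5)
Your outline is correct and is essentially the standard Hairer--Mattingly coupling argument; the paper itself does not prove Theorem \ref{thm:harris} but explicitly defers to \cite{HM11,H16,CM21}, and your construction of the weighted semimetric $d_\varepsilon$, the two-regime case analysis (minorisation inside $\mathcal C$, Lyapunov contraction outside), and the role of the threshold on $R$ match exactly what those references do. The only point to be careful about in a full write-up is that the constant $\alpha$ appearing in the lower bound on $R$ is itself the target contraction rate, so the calibration of $\varepsilon$ and $\alpha$ is genuinely simultaneous rather than sequential; this is the "careful bookkeeping" you allude to and is handled in \cite{HM11}.
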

	\begin{remark}
		The constants $C, \sigma $ can be computed explicitly in terms of $D, \tau, \alpha, \beta, \zeta$ (see Remark 3.10 in \cite{H16} or Remark 2 in \cite{Y22}). 
	\end{remark}
	There are versions of Harris's Theorem adapted to weaker Lyapunov conditions, providing subgeometric convergence results, see, e.g., \cite{BCG08, DFG09, DFMS04, DMT95}. 
	Here, we state and use a version which is found in \cite{H16, CM21, Y22}. We refer the reader, e.g., to \cite{DFG09} (Theorems 3.10 and 3.12) or to \cite{BCG08} (Theorem 1.2) for different versions of this theorem.

	\begin{thm}[Subgeometric Harris's Theorem] \label{thm:subgeo}
		Let $(S_t)_{t \geq 0}$ be a Markov semigroup satisfying the following hypotheses:
		\begin{hypothesis} [Weaker Foster-Lyapunov condition]\label{hyp:FL2} 
			There exist constants $\zeta> 0, \, D \geq 0$ and a continuous function $\phi : \Omega \to [1, +\infty)$ with pre-compact sub-level sets such that 
			\begin{align}
				\label{FL2} \tag{FL2}
				\mc L^* [\phi] (z) \leq D -\zeta h (\phi),
			\end{align} where $h: \R_+ \to \R$ is a strictly concave, positive, increasing function and $\lim_{u \to +\infty} h'(u)=0$.
		\end{hypothesis}
		
		\begin{hypothesis}[Minorisation condition]\label{hyp:M2} For every $R>0$, there exist a probability measure $\nu$, a constant $\beta \in (0,1)$ and some time $\tau >0 $ such that
			\begin{align}
				\label{M2} \tag{M2}
				S_\tau \delta_z \geq \beta \nu \quad \mbox{for all }z \in \mc C,
			\end{align}  where $\mathcal{C} =  \{ z \colon \phi(z) \leq R \}$.
		\end{hypothesis}
		Then $(S_t)_{t \geq 0}$ has a unique invariant measure $\mu_\infty$ satisfying
		\begin{align*}
			\int h(\phi(z)) \mu_\infty (\d z)\leq D,
		\end{align*} and there exists a constant $C$ such that 
		\begin{align*}
			\|S_t \mu - \mu_\infty\|_{\mathrm {TV}}  \leq \frac{C \mu (\phi)}{H_h^{-1}(t)} + \frac{C}{(h \circ H_h^{-1})(t)}
		\end{align*} holds for every $\mu(\phi) = \int \phi(z) \mu (\d z)$ where the function $H_h$ is defined by
		\begin{align*}
			H_h := \int_1^u \frac{\d s}{h(s)}. 
		\end{align*}
	\end{thm}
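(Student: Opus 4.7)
My plan is to follow the Meyn--Tweedie--Hairer coupling strategy, adapted to the subgeometric regime as in \cite{DFG09, H16, CM21}. The core idea is to combine the minorisation \eqref{M2}, which produces a contraction of the coupling distance each time both copies lie inside the sublevel set $\mc C = \{\phi \leq R\}$, with the Foster-Lyapunov condition \eqref{FL2}, which controls the return time to $\mc C$. In contrast with the geometric case of Theorem \ref{thm:harris}, the concavity of $h$ means that returns occur only on a subgeometric timescale measured by $H_h$, and the rate $1/H_h^{-1}(t)$ emerges naturally from this analysis.

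I would first establish existence of the invariant measure and the integral estimate. Applying Dynkin's formula to \eqref{FL2} gives $\mathbb{E}_z[\phi(Z_t)] + \zeta \int_0^t \mathbb{E}_z[h(\phi(Z_s))] \d s \leq \phi(z) + D t$, so the time averages $\frac{1}{t}\int_0^t \mathbb{E}_z[h(\phi(Z_s))] \d s$ are uniformly bounded in $t$. Pre-compactness of the sublevel sets of $\phi$ then yields tightness of the Ces\`aro means $\frac{1}{t}\int_0^t S_s^*\delta_z \d s$, and a Krylov--Bogolyubov argument produces an invariant measure $\mu_\infty$ satisfying $\int h(\phi) \d \mu_\infty \leq D$ by Fatou. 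Uniqueness will then follow from the contraction step below.

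For the quantitative rate, the key reduction is to produce a better Lyapunov function adapted to $h$. Setting $H_h(u) = \int_1^u \d s/h(s)$, a computation relying on the strict concavity of $h$ and Jensen's inequality shows that $H_h \circ \phi$ satisfies a linear drift bound $\mc L^*[H_h\circ \phi] \leq K$, from which Dynkin's formula gives $\mathbb{E}_z[H_h(\phi(Z_t))] \leq H_h(\phi(z)) + K t$. Markov's inequality together with monotonicity of $H_h^{-1}$ translates this into a tail bound on the first return time $\sigma_{\mc C}$ to $\mc C$ of the form $\mathbb{P}_z(\sigma_{\mc C} > t) \lesssim \phi(z)/(h\circ H_h^{-1})(t)$. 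Coupling two copies of the process, one started from $\mu$ and the other from $\mu_\infty$, and invoking \eqref{M2} each time they simultaneously visit $\mc C$, then yields the two contributions in the stated total variation bound: a term of order $\mu(\phi)/H_h^{-1}(t)$ from mass that has not yet met in $\mc C$, and one of order $1/(h\circ H_h^{-1})(t)$ from the residual uncoupled mass after meeting.

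The main obstacle will be the passage from the continuous-time semigroup to a tractable discrete skeleton chain without degrading the sharp subgeometric rate. One has to pick a time step compatible with \eqref{M2}, verify that the discrete-time drift inherited from \eqref{FL2} remains subgeometric with essentially the same $h$, and lift the resulting discrete-time estimate back to continuous time using the uniform $L^1$ bound on $\phi(Z_t)$. Tracking all the constants through this reduction, and through the iterated coupling step for which the meeting probability at each visit to $\mc C$ is only $\beta$, is the most delicate bookkeeping in the argument.
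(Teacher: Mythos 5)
The paper does not actually prove Theorem \ref{thm:subgeo}: it states explicitly that the proofs are skipped and defers to \cite{CM21,DFG09,H16}. Your outline is essentially the standard argument found in those references (Krylov--Bogolyubov plus Fatou for existence of $\mu_\infty$ and the bound $\int h(\phi)\,\d\mu_\infty\leq D$; concavity of $H_h$ to convert \eqref{FL2} into a drift for $H_h\circ\phi$; modulated moments of the return time to $\mc C$; iterated coupling via \eqref{M2}), so there is no divergence of approach to report. One step is stated too loosely, though: from $\mathbb{E}_z[H_h(\phi(Z_t))]\leq H_h(\phi(z))+Kt$, Markov's inequality only yields $\mathbb{P}_z(\sigma_{\mc C}>t)\lesssim \phi(z)/t$, not the stronger tail $\phi(z)/(h\circ H_h^{-1})(t)$ you claim. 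The correct route in \cite{DFG09,H16} is to prove the modulated-moment bound $\mathbb{E}_z\bigl[(h\circ H_h^{-1})(\sigma_{\mc C})\bigr]\lesssim \phi(z)$ via a time-dependent supermartingale (comparison) argument using that $\frac{\d}{\d t}H_h^{-1}(t)=h(H_h^{-1}(t))$, and then to obtain the two-term rate by pairing this moment with the plain moment $\mathbb{E}_z[\sigma_{\mc C}]\lesssim\phi(z)$ in the coupling construction. With that correction your plan matches the cited proofs.
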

	The proofs of these theorems can be found in \cite{CM21,DFG09,H16}. In \cite{DFG09,H16}, the authors make a weaker assumption, namely $h(u) \leq u$ for any $u \geq 1$, instead of the one which is stated above $\lim_{u \to +\infty} h'(u)=0$. Their assumption allows for linear growth at infinity, whereas $\lim_{u \to +\infty} h'(u)=0$ essentially requires $h$ to be flat at infinity.
	
	For the case of angularly dependent jump kernels, we are able to recover exponential convergence using Harris's theorem (Theorem \ref{thm:harris}). The Foster-Lyapunov condition is proven in a similar way though more intricate than the work in our previous paper \cite{EY21}. The minorisation condition is considerably more challenging due to the fact that we need to track the dynamics over many more jumps to produce a lower bound. Therefore, we present the proof that the minorsiation condition holds in dimension $d=2$  to simplify the computations of the lower bound. Thus, the convergence result is stated in $d=2$ in Theorem \ref{thm:harris}. However, we believe that there is no serious obstacle to generalising it to higher dimensions.

	For the case of unbounded velocity spaces, we are only able to show subgeometric rates of convergence. We do this via the subgeometric version of Harris's theorem (Theorem \ref{thm:subgeo}). Here, we are able to use exactly the same uniform minorisation condition as in our previous paper \cite{EY21} in the linear case. The Foster-Lyapunov condition is substantially different and this is reflected in the subgeometric rate of convergence. 
	
	Before verifying these hypotheses for Eq. \eqref{eq:rt} we would like to add some comments on the Foster-Lyapunov condition. In order to show that \eqref{FL1} holds  true for Eq. \eqref{eq:rt} we would like to find some function $\phi(z)$ where $\phi(z) \rightarrow \infty$ as $|z| \rightarrow \infty$ and the existence of some $\tau>0$, $C>0$ and $\alpha \in (0,1)$ such that 
	\begin{align} \label{ineq:lyapunov1}
		\int_{\R^d} \int_{\mc V}\phi(x,v) f(\tau, x,v) \d x \d v \leq \alpha \int_{\R^d} \int_{\mc V} \phi(x,v )  f_0(x,v) \d x \d v + C, 
	\end{align} for any initial data $f_0(x,v) \in \mathcal{P}(\R^d\times \mathcal{V})$. This is because for $f$ satisfying $\partial_t f = \mathcal{L}[ f]$, \eqref{ineq:lyapunov1} is equivalent to showing that
	\begin{align} \label{ineq:lyapunov2}
		\mathcal{L}^* [\phi] \leq D - \zeta \phi , 
	\end{align} where $\mathcal{L}^*$ is the formal adjoint of $\mathcal{L}$ and $\zeta =  \frac{\log \alpha}{\tau}$ and $D = C \frac{\log \alpha}{\tau (1+\alpha)}$.
	In our case we have 
	\begin{align}
		\label{eq:L_generator}
		\mathcal{L}[f] (x,v) = - v\cdot \nabla_x f + \int_{\mathcal{V}} \lambda(v'\cdot \nabla_x M(x))\kappa(v',v) f(t, x,v')\d v' - \lambda(v \cdot \nabla_x M(x)) f(t, x,v), 
	\end{align}and therefore, 
	\begin{align}
		\label{eq:L_adjoint}
		\mathcal{L}^*[\phi] (x,v)= v\cdot \nabla_x \phi (x,v)+ \lambda(v\cdot \nabla_x M(x)) \left( \int_{\mathcal{V}}\kappa(v,v')\phi(x,v') \d v' - \phi(x,v)\right). 
	\end{align}

	\section{Angle-dependent tumbling kernel} \label{sec:angle-dependent}
	
	This section is dedicated to the long-time behaviour of the linear run and tumble equation, Eq. \eqref{eq:rt} with the angularly dependent tumbling kernel \eqref{asm:K_1}. 
	The following two sections are dedicated to verifying the two hypotheses of Harris's theorem (Theorem \ref{thm:harris}). At the end of this section, we provide a proof of Theorem \ref{thm:angulardep}.
	
	\subsection{Minorisation condition}
	
	In this section, we prove that Hypothesis \ref{hyp:M1}, \eqref{M1}, is satisfied for Eq. \eqref{eq:rt} with the angular-dependent tumbling kernel \eqref{asm:K_1}. Our overall strategy is as follows: we show that the solution $f \equiv f_t$ fo Eq. \eqref{eq:rt} attains a lower bound using a Duhamel's formula and bounding below by the part of $f_t$ where there are a particular number of jumps. 
	
	To keep the exposition of our strategy clear, we provide the following computations for only $d=2$. The very same approach can be used to extend the results to higher dimensions. 
	
	Denoting  $v_\theta := (\cos\theta, \sin \theta)$ and $\nabla_{x,y} g:= (\partial_x g, \partial_y g)$ for any differentiable function $g$, Eq. \eqref{eq:rt} becomes 
	\begin{align} \label{eq:rt_2d}
		\partial_t f + v_\theta  \cdot \nabla_{x,y} f = \int b(\theta'-\theta) \lambda( v_\theta \cdot \nabla_{x,y} M) f(t, x,y, \theta') \mathrm{d}\theta' - \lambda( v_\theta \cdot \nabla_{x, y} M)f  .
	\end{align}
	Next, we define the semigroup $(T_t)_{t\geq 0}$ accounting for the transport part,
	\begin{align*}
		(T_t f)(t, x,y, \theta) := f( t, x -t \cos \theta, y - t \sin \theta, \theta), 
	\end{align*}
	and the operator $\tilde L$, 
	\begin{align*}
		(\tilde{L}f)(t, x,y,\theta) := \int \1_{|\theta'-\theta|< \alpha} (\theta') f(t, x, y, \theta') \d\theta' = \int \1_{|\theta'|< \alpha} (\theta') f(t, x,y, \theta + \theta')\d \theta'.
	\end{align*}
	Then we have the following lemma
	\begin{lem} \label{lem:duhamel} 
		Suppose that $b(\theta) \geq \beta \1_{|\theta| \leq \alpha} (\theta )$. Then for any $n \geq 1$ we have 
		\begin{multline*} f(t,x, y,\theta ) \geq \\
			\beta ^n(1-\chi)^n e^{-(1+\chi)t}\int_0^t \int_0^{t_n} \dots \int_0^{t_2} \int_0^{t_1} \left(T_{t-t_n} \tilde{L} T_{t_n - t_{n-1}} \dots  \tilde{L} T_{t_2-t_1} \tilde{L} T_{t_1} f_0\right)(x, y, \theta) \mathrm{d}t_1 \d t_2 \dots \d t_n. 
		\end{multline*}
	\end{lem}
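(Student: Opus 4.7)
The plan is to iterate Duhamel's formula after rewriting \eqref{eq:rt_2d} so that the loss term is replaced by a constant-rate killing that dominates it. Since $\lambda = 1 - \chi\psi$ with $\|\psi\|_\infty \leq 1$ and $\chi \in (0,1)$, the tumbling rate satisfies $\lambda(m) \in [1-\chi,\, 1+\chi]$ pointwise, which furnishes the two-sided bounds needed for the iteration.

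First, I add $(1+\chi)f$ to both sides of \eqref{eq:rt_2d} to rewrite it as
\begin{equation*}
\partial_t f + v_\theta \cdot \nabla_{x,y} f + (1+\chi) f \;=\; \bigl((1+\chi) - \lambda(v_\theta \cdot \nabla_{x,y} M)\bigr) f \;+\; K[f],
\end{equation*}
where $K[f](\theta) := \lambda(v_\theta \cdot \nabla_{x,y} M)\int b(\theta'-\theta)\, f(\theta')\,\d\theta'$. Both summands on the right are nonnegative. The left-hand operator generates the damped transport semigroup $e^{-(1+\chi)t}T_t$, so Duhamel's formula yields
\begin{equation*}
f(t) \;=\; e^{-(1+\chi)t} T_t f_0 \;+\; \int_0^t e^{-(1+\chi)(t-s)}\, T_{t-s}\!\left[\bigl((1+\chi)-\lambda\bigr)f(s) + K[f](s)\right] \d s.
\end{equation*}

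Second, I derive a one-step lower bound. Dropping the nonnegative remainder $((1+\chi)-\lambda)f$ and applying $\lambda \geq 1-\chi$ together with $b(\theta'-\theta)\geq \beta\,\1_{|\theta'-\theta|<\alpha}$ inside $K$ produces $K[f] \geq (1-\chi)\beta\, \tilde L f$. Dropping also the manifestly nonnegative free term $e^{-(1+\chi)t}T_t f_0$ gives the base recursion
\begin{equation*}
f(t) \;\geq\; (1-\chi)\beta \int_0^t e^{-(1+\chi)(t-s)}\, T_{t-s}\, \tilde L\, f(s)\, \d s.
\end{equation*}
Plugging in the initial-datum piece of Duhamel, $f(s) \geq e^{-(1+\chi)s} T_s f_0$, and collapsing $e^{-(1+\chi)(t-s)} e^{-(1+\chi)s} = e^{-(1+\chi)t}$ proves the $n=1$ case.

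Third, I induct on $n$: assuming the inequality for $n-1$, I substitute that bound into the base recursion in place of $f(s)$. The exponentials again collapse to a single $e^{-(1+\chi)t}$ prefactor, the constants accumulate to $\beta^n(1-\chi)^n$, and the simplex $\{0<t_1<\cdots<t_{n-1}<s\}$ combined with $s\in(0,t)$ rearranges, after relabeling $t_n := s$, into $\{0<t_1<\cdots<t_n<t\}$; the alternating string $T_{t-t_n}\tilde L T_{t_n-t_{n-1}} \tilde L \cdots \tilde L T_{t_1} f_0$ appears in the correct order. The argument is essentially bookkeeping; the one conceptual point is the upper bound $\lambda\leq 1+\chi$, which lets us encode all dissipation into the explicit factor $e^{-(1+\chi)t}$ while maintaining positivity, so no genuine obstacle is expected.
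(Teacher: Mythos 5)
Your proof is correct and follows essentially the same route as the paper: a Duhamel representation followed by $n$-fold iteration, using the two-sided bound $1-\chi \leq \lambda \leq 1+\chi$ and $b \geq \beta\1_{|\theta|\leq\alpha}$ to extract the prefactor $\beta^n(1-\chi)^n e^{-(1+\chi)t}$. The only (cosmetic) difference is that you absorb the loss term into a constant killing rate $(1+\chi)$ and keep the nonnegative remainder $((1+\chi)-\lambda)f$, whereas the paper integrates along characteristics with the exact integrated rate $\Lambda(t)$ and then bounds it; both yield the same recursion.
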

	\begin{proof}
		We have that
		\begin{multline*} 
			\p_t( f(t, x- t\cos \theta, y - t \sin \theta, \theta) =\lambda(v_\theta \cdot \nabla_{x,y} M(x-t\cos \theta, y - t\cos \theta)) \times\\
			\left( \int b(\theta') f(t, x-t \cos \theta, y - t \sin \theta, \theta + \theta') \d  \theta' - f(t, x-t\cos \theta, y-t \sin \theta, \theta ) \right).  
		\end{multline*} 
		Therefore writing $\Lambda (t,x,y,\theta) = \int_0^t \lambda( v_\theta \cdot \nabla_x M(x-s\cos \theta, y - s\cos \theta))\d  s$ we obtain
		\begin{multline*}
			f(t, x-t\cos \theta, y - t\sin \theta, \theta) = e^{-\Lambda(t)}f_0(x,y,\theta) \\ 
			+ e^{-\Lambda(t)}\int_0^t\lambda(v_\theta \cdot \nabla_x M(x-s\cos \theta, y - s\cos \theta)) \int b(\theta') f(t, x-s \cos \theta, y - s \sin \theta, \theta + \theta') \d \theta \d s. 
		\end{multline*} 
		Changing variables and using the fact that $1-\chi \leq \lambda \leq 1+\chi$, we obtain
		\begin{align*}
			f(t,x,y, \theta) \geq e^{-(1+\chi)t}T_t f_0 + (1-\chi)\int_0^t e^{-(1+\chi)(t-s)} \int b(\theta') (T_{t-s}f_s)(x, y, \theta + \theta') \d \theta'  \d s.
		\end{align*}
		Then using the assumption on $b$ we have
		\begin{align*}
			f(t,x,y, \theta) \geq e^{-(1+\chi)t}T_t f_0 + \beta (1-\chi)\int_0^t e^{-(1+\chi)(t-s)} T_{t-s} \tilde{L} f_s \d s. 
		\end{align*}
		We note that the second term above is positive, so we have
		\begin{align*}
			f(t,x,y, \theta) \geq e^{-(1+\chi)t}T_t f_0. 
		\end{align*}
		Then, we can plug this into the integral term to obtain
		\begin{align*}
			f(t,x,y, \theta)  \geq \beta (1-\chi) e^{-(1+\chi)t}\int_0^t T_{t-s} \tilde{L} T_s f_0 \d s. 
		\end{align*}
		We then continue substituting this into the integral term $n$ times to obtain the result for any $n \geq 1$. This finishes the proof.
	\end{proof}
	
	Next, we want to show that we can find an $n \geq 1$ and a range of admissible times for which we can provide a lower bound on the term $T_{t-t_n} \tilde{L} T_{t_n - t_{n-1}} \dots \tilde{L} T_{t_3-t_2} \tilde{L} T_{t_2-t_1} \tilde{L} T_{t_1} f_0$. 
	
	Let us define $P_t g := \tilde L T_t {g}$ for any probability measure $g$. Then we write $P_{t_1,t_2,t_3}^3 := P_{t_1} P_{t_2} P_{t_3}$ and note that for any measure $g$, $P_{t_1,t_2,t_3}^3 g$ has a density. In particular, $P_{t_1,t_2,t_3}^3 \delta_{x_0} \delta_{y_0} \delta_{\theta_0} (x,y, \theta)$ is bounded below by a uniform measure of a set with non-empty interior. Hence, we can bound it below by a constant times the indicator function of red a ball in the position variable $x$ with a centre at a point which we can compute. 
	We can then use this computation to show that $P^3_{t_1, t_2, t_3}$ acting on the indicator function of a ball is bounded below by a constant times the indicator function of another ball with a slightly larger radius and a different centre. By tracking how these centres move and how the radii of the balls grow, we can then get a bound below in $(x,y)$-space by the indicator function of a ball whose centre is at the origin (rather than depending on the initial point). 
	
	We then need to prove that we can subsequently reach all possible angles (i.e., velocities) while maintaining a uniform lower bound below in position. We show this by looking at repeated jumps in a small time period so that we reach all angles without moving too far. This then allows us to reach all possible angles but slightly shrinks the ball we found in the lower bound for the spatial variables.

	Therefore, in the following lemma, we look at $P_{t_1,t_2,t_3}^3 \delta_{x_0} \delta_{y_0} \delta_{\theta_0} (x,y, \theta)$ and show that it is bounded below by a constant times the indicator function of a set which contains a ball whose radius and centre we can compute. 
	
	\begin{lem} \label{lem:crecent}
		Let $\alpha < \dfrac{\pi}{2}$ and $r_1, r_2, r_3 >0 $ are a set of times, $\varepsilon >0$ and $|s_i-r_i| < \varepsilon$ for each $i \in \{1,2,3\}$. Then if $\varepsilon$ is sufficiently small we have 
		\begin{align*}
			P^3_{s_1,s_2,s_3} \delta_{x_0} \delta_{y_0} \delta_{\theta_0}(x, y, \theta) \geq \frac{1}{s_2 s_3} \1_{B((x_*,y_*); r)} (x,y)\1_{|\theta-\theta_0 \pm \delta \theta| \leq \frac{\alpha}{2}} (\theta),
		\end{align*} where  
		\begin{align*}
			x_* = x_0 &+ (r_1+R)\cos(\theta_0),  \quad y_* = y_0 + (r_1+R) \sin(\theta_0), \quad  r= r_2r_3(1-\cos(\alpha/2)), \\
			&\delta\theta =  \pm \arctan \left( \frac{r_2 \sin(\alpha/2)}{r_3+r_2\cos(\alpha/2)} \right),  \quad R= \sqrt{ r_2^2 + r_3^2 + 2r_2r_3 \cos(\alpha/2)}.
		\end{align*} 
	\end{lem}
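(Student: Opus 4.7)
The plan is to unfold the three-fold application of $P_t = \tilde L T_t$ on the Dirac mass $\delta_{(x_0,y_0,\theta_0)}$, obtain an explicit density on the reachable set, and then identify geometrically a spatial ball on which that density is bounded below by $\tfrac{1}{s_2 s_3}$. Since each $P_t$ consists of a deterministic translation along the current direction followed by a uniform angular smearing of length $2\alpha$, iterating three times and tracking the support step by step reduces the computation to a double integral over two auxiliary angles, with two of the three transport times showing up as a Jacobian factor.

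Parametrising the two post-initial jump angles by $\varphi$ and $\psi$, I would verify by direct computation that
\begin{align*}
P^3_{s_1,s_2,s_3}\delta_{(x_0,y_0,\theta_0)}(x,y,\theta) = \int\!\!\int \mathbf{1}_{|\varphi-\theta_0|<\alpha}\,\mathbf{1}_{|\psi-\varphi|<\alpha}\,\mathbf{1}_{|\theta-\psi|<\alpha}\,\delta_{F(\varphi,\psi)}(x,y)\,d\varphi\,d\psi,
\end{align*}
where $F$ sends $(\varphi,\psi)$ to the point obtained by translating $(x_0,y_0)$ successively by $s_a(\cos\theta_0,\sin\theta_0)$, $s_b(\cos\varphi,\sin\varphi)$ and $s_c(\cos\psi,\sin\psi)$, with $\{s_a,s_b,s_c\}$ the permutation of $\{s_1,s_2,s_3\}$ dictated by the composition order. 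A one-line matrix computation yields $|\det DF(\varphi,\psi)| = s_b s_c\,|\sin(\psi-\varphi)|$, so restricting $(\varphi,\psi)$ to a sub-rectangle on which $F$ is injective and $|\sin(\psi-\varphi)| \leq 1$, the change-of-variables formula delivers a pointwise lower bound by $(s_b s_c)^{-1}$ on the image, which is exactly the factor $\tfrac{1}{s_2 s_3}$ appearing in the statement.

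The heart of the proof, and the main obstacle, is the geometric identification of the image of such a rectangle. I would choose $\varphi - \theta_0 \in [0,\alpha/2]$ and $\psi - \varphi \in [-\alpha/2,0]$ (or the symmetric reflection, which produces the $\pm$ in the statement); then $F$ is precisely the map sending a pair of hinge angles of a two-link arm of lengths $s_b, s_c$ attached at $(x_0 + s_a\cos\theta_0,\, y_0 + s_a\sin\theta_0)$ to its tip. The ``midpoint'' configuration at the centre of this rectangle places the tip at $(x_*,y_*)$: by the law of cosines the displacement from the attachment point has magnitude $R=\sqrt{r_2^2+r_3^2+2r_2r_3\cos(\alpha/2)}$ and argument $\theta_0 \pm \delta\theta$ with $\delta\theta = \arctan\bigl(r_2\sin(\alpha/2)/(r_3+r_2\cos(\alpha/2))\bigr)$. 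Expanding $F$ around this midpoint and using the elementary bound $1-\cos(\alpha/2)$ on the trigonometric defect, I would then check that the swept region contains the disk $B((x_*,y_*),\, r_2 r_3(1-\cos(\alpha/2)))$; this is the step where the crescent-shape geometry matters, and where care is needed to control both ``radial'' and ``transverse'' extent of the image. The remaining constraint $|\theta-\psi|<\alpha$, combined with $\psi$ ranging over a half-window of length $\alpha/2$, gives the angular interval $|\theta - \theta_0 \pm \delta\theta|\leq \alpha/2$. Finally, continuity of $F$, of its Jacobian, and of all the geometric quantities above in $(s_1,s_2,s_3)$ guarantees that the conclusion persists for $|s_i-r_i|<\varepsilon$ once $\varepsilon$ is small enough, which closes the argument.
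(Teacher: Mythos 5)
Your proposal follows essentially the same route as the paper's proof: unfolding the three applications of $\tilde L T_t$ into an integral over the intermediate angles, the change of variables with Jacobian $s_2 s_3|\sin(\theta_2)|$ bounded by $1$ to produce the factor $\frac{1}{s_2 s_3}$, the law-of-cosines identification of $R$, $\delta\theta$ and the ball centre from the two-link-arm picture, and the final continuity argument in $(s_1,s_2,s_3)$. The containment of the disk of radius $r_2 r_3(1-\cos(\alpha/2))$ in the swept region is left at the same level of informality as in the paper, so the two arguments match in both structure and detail.
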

	\begin{proof}
		Since we have 
		\begin{multline*}
			T_{s_1}(\delta_{x_0}\delta_{y_0} \delta_{\theta_0}) (x,y,\theta) = \delta_{x_0 + s_1 \cos(\theta_0)}(x)\delta_{y_0 + s_1 \sin(\theta_0)}(y) \delta_{\theta_0} (\theta) 
			\\= \delta_{x_0}(x-s_1\cos (\theta_0)) \delta_{y_0} (y - s_1 \sin (\theta_0)) \delta_{\theta_0} (\theta),
		\end{multline*} and
		\begin{align*}
			\tilde{L}T_{s_1}(\delta_{x_0}\delta_{y_0} \delta_{\theta_0}) (x,y,\theta)=\int_{-\alpha}^\alpha \delta_{x_0 + s_1 \cos(\theta_0)}(x) \delta_{y_0 + s_1 \sin(\theta_0)}(y) \delta_{\theta_0}(\theta-\theta') \mathrm{d}\theta',
		\end{align*}
		applying $T_{s_2}$ once more we obtain,
		\begin{multline*}
			T_{s_2} \tilde{L} T_{s_1}(\delta_{x_0}\delta_{y_0} \delta_{\theta_0}) (x,y, \theta) \\= \int_{-\alpha}^\alpha \delta_{x_0}( x-s_1 \cos(\theta_0) - s_2 \cos(\theta_0 + \theta_1)) \delta_{y_0}(y-s_1 \sin(\theta_0) - s_2 \sin(\theta_0 + \theta_1)) \delta_{\theta_0}(\theta-\theta_1) \mathrm{d} \theta_1.  
		\end{multline*}
		Iterating this yields
		\begin{multline*}
			\tilde{L}T_{s_3}\tilde{L} T_{s_2} \tilde{L} T_{s_1} (\delta_{x_0}\delta_{y_0} \delta_{\theta_0}) (x,y, \theta)  \\ = \int_{-\alpha}^\alpha \int_{-\alpha}^\alpha \int_{-\alpha}^\alpha \delta_{x_0}(x-s_1 \cos(\theta_0) - s_2 \cos(\theta_0 + \theta_1) - s_3 \cos(\theta_0 + \theta_1 + \theta_2)) \\
			\times \delta_{y_0}(y-s_1 \sin(\theta_0) - s_2 \sin(\theta_0 + \theta_1) - s_3 \sin (\theta_0 + \theta_1 + \theta_2))\delta_{\theta_0}(\theta - \theta_1 - \theta_2 - \theta_3) \d \theta_1 \d \theta_2 \d \theta_3.
		\end{multline*}
		We perform a change of variables, 
		\begin{align*}
			\tilde{x} = s_2 \cos(\theta_0+\theta_1) + s_3 \cos(\theta_0 + \theta_1+\theta_2), \quad   \tilde{y} = s_2 \sin(\theta_0 + \theta_1) + s_3 \sin(\theta_0+\theta_1+\theta_2),
		\end{align*} then we have 
		\begin{align*}
			\frac{\partial \tilde{x}}{\partial \theta_1} = -s_2 \sin(\theta_0 + \theta_1) -s_3\sin(\theta_0 + \theta_1 +\theta_2), \quad \frac{\partial \tilde{y}}{\partial \theta_1} = s_2 \cos(\theta_0 + \theta_1) + s_3 \cos(\theta_0 + \theta_1 + \theta_2)
		\end{align*} and
		\begin{align*}
			\frac{\partial\tilde{x}}{\partial \theta_2} = -s_3 \sin(\theta_0 + \theta_1 + \theta_2), \quad \frac{\partial \tilde{y}}{\partial \theta_2} = s_3 \cos(\theta_0 + \theta_1 + \theta_2). 
		\end{align*}
		The Jacobian of this change of variables is $\d \tilde{x} \d \tilde{y} = s_2 s_3 \sin(\theta_2) \d \theta_1 \d \theta_2$. 
		
		Moreover, defining $\tilde{\theta} := \theta_1 + \theta_2 +\theta_3$ and then $\mathrm{d}\tilde{\theta} = \mathrm{d}\theta$, we obtain 
		\begin{multline*}
			\tilde{L}T_{s_3}\tilde{L} T_{s_2} \tilde{L} T_{s_1} (\delta_{x_0}\delta_{y_0} \delta_{\theta_0}) (x,y,\theta)= \int_{S(x_0,y_0,\theta_0, s_1,s_2,s_3)} \delta_{x_0}(x-s_1 \cos(\theta_0) - \tilde{x})) \\
			\times \delta_{y_0}(y-s_1 \sin(\theta_0) - \tilde{y})\delta_{\theta_0}(\theta - \tilde{\theta})\frac{1}{s_2s_3 |\sin(\theta_2(\tilde{x},\tilde{y}))|} \d \tilde{x} \d \tilde{y} \d \tilde{\theta},
		\end{multline*}
		where $S(x_0,y_0,\theta_0,s_1,s_2,s_3)$ is the set of possible values of $x_0 + s_1 \cos(\theta_0) +\tilde{x}, y_0 + s_1 \sin(\theta_0) +\tilde{y}$, and $\theta_0 + \tilde{\theta}$, (see \textbf{\em Figure 1} below). 
		Then, as $|\sin(\theta_2) | \leq 1$ we have 
		\begin{align*}
			\tilde{L}T_{s_3}\tilde{L} T_{s_2} \tilde{L} T_{s_1} (\delta_{x_0}\delta_{y_0} \delta_{\theta_0}) (x,y,\theta) \geq \frac{1}{s_2 s_3} \mathds{1}_{S(x_0,y_0,\theta_0, s_1, s_2, s_3)} (x,y, \theta).
		\end{align*}
		Next, we want to show that $S(x_0,y_0,\theta_0, s_1, s_2, s_3)$ contains the ball mentioned in the statement of the lemma. We notice that
		\begin{align*}
			\tilde{x}^2+ \tilde{y}^2 = s_1^2 +s_2^2 + 2 s_1 s_2 \cos(\theta_2) = R(\theta_2)^2. 
		\end{align*} 
		We choose $\theta_2 = \pm \frac{\alpha}{2}$ and 
		$\theta_1  =  \pm \arctan \left( \frac{s_3 \sin(\alpha/2)}{s_2 +s_3\cos(\alpha/2)} \right)$ and this will give the centre of the ball.
		
		Now for a given $\theta_2$,  we choose $\beta =  \arctan \left( \frac{s_3 \sin(\theta_2)}{s_2 +s_3\cos(\theta_2)} \right)$ then we can write
		\begin{align*}
			\tilde{x} &= (s_2 \cos (\beta )+ s_3 \cos(\theta_2 - \beta)) \cos(\theta_0 + \theta_1 + \beta) = R(\theta_2) \cos(\theta_0 + \theta_1 + \beta) \\
			\tilde{y} &= (s_2 \cos (\beta)+ s_3 \sin(\theta_2 - \beta)) \sin(\theta_0 + \theta_1 + \beta) = R(\theta_2) \sin(\theta_0 + \theta_1 + \beta).
		\end{align*}
		So we can set $\tilde{x} = R\cos(\omega)$ and $\tilde{y}=R \sin(\omega)$ as long as we can choose 
		\begin{align*}
			\theta_2 = \arccos \left  ( \frac{R^2-s_2^2-s_3^2}{s_2s_3} \right ) \, \mbox{   and   } \,  \theta_1 = \omega - \arctan \left ( \frac{s_3 \sin(\theta_2)}{s_2 + s_3 \cos(\theta_2)} \right ). 
		\end{align*} We can see that for $\theta_1$ and $\theta_2$ small we have
		\begin{align*}
			\theta_1 \approx \omega - \frac{s_3 \theta_2}{s_2 +s_3}.
		\end{align*}
		Thus, we expect to be able to choose $|\beta|$ up to $\frac{s_3 \alpha}{s_2+s_3}$ which is at least as large as it is needed to cover the ball given that $\alpha$ is small. Then since all the computations depend continuously on $s_1, s_2, s_3$ and the ball is strictly contained inside the set $S$, we can replace $s_1, s_2, s_3$ with $r_1, r_2, r_3$ provided that $\varepsilon$ is small enough.  This concludes the proof. 
	\end{proof}
	\begin{center}
		\fbox{ 
			\begin{minipage}[c]{0.35\textwidth}
				\textbf{\em Figure 1.} \label{fig1}
				This image is an illustration of the set $S$ (dotted region) mentioned in the proof of Lemma \ref{lem:crecent} for fixed values of $r_2, r_3, \alpha$. The ball found inside the set $S$ is grey coloured.
			\end{minipage}%
			\begin{minipage}[c]{0.3\textwidth}
				\raggedleft
				\begin{tikzpicture} 
					\draw[thick, pattern=dots] (0,0) -- (2,0.1)  arc(-70:70:3)  -- (2,5.75) -- (0,5.9) arc(80:-80:3);
					\draw[thick, fill=lightgray, dashed] (3.23,3) circle (0.7cm);
				\end{tikzpicture}
			\end{minipage}
		}
	\end{center}
	
	\,
	
	Using Lemma \ref{lem:crecent} we then prove the following:
	
	\begin{lem} \label{lem:crecent_ball} Let $r_1, r_2, r_3 >0$ be a set of times, $R, \alpha$ are as in Lemma \ref{lem:crecent}. Then we have 
		\begin{align*}
			P^3_{r_1, r_2, r_3} \mathds{1}_{B((x_0,y_0);\tilde{r})}(x,y) \1_{|\theta-\theta_0|<\frac{\alpha}{2}} (\theta) \geq \gamma \1_{B(x_{**}, y_{**}; \tilde{r} + \frac{r}{2})} (x,y)\1_{|\theta_{**}-\theta| \leq \alpha} (\theta),
		\end{align*}
		where 
		\begin{align*}
			x_{**} = x_0 - (r_1+R) \cos(\theta_0+\delta\theta), \quad y_{**} = y_0 - (r_1+R) \sin(\theta_0+\delta\theta), \quad \theta_{**}= \theta_0 + \delta\theta
		\end{align*} and $\gamma$ a constant that can be computed explicitly. Let us define the map $F$,
		\begin{align} \label{eq:F}
			F(x_0,y_0,\theta_0) := (x_{**}, y_{**}, \theta_{**})
		\end{align} which tracks how the centres of the balls move. 
	\end{lem}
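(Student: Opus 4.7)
The plan is to express the input $\1_{B((x_0,y_0);\tilde r)}\,\1_{|\theta-\theta_0|<\alpha/2}$ as a superposition of Dirac masses, apply Lemma~\ref{lem:crecent} pointwise, and then extract an indicator-type lower bound via a geometric argument. By linearity of $P^3 := P^3_{r_1, r_2, r_3}$ and Fubini,
\begin{equation*}
\bigl(P^3 \1_B \1_\theta\bigr)(x,y,\theta) = \int_{|\theta_0'-\theta_0|<\alpha/2} \int_{B((x_0,y_0);\tilde r)} \bigl(P^3 \delta_{(x_0', y_0', \theta_0')}\bigr)(x,y,\theta)\, \d x_0'\, \d y_0'\, \d \theta_0'.
\end{equation*}
Applying Lemma~\ref{lem:crecent} with $s_i = r_i$ to each Dirac mass gives the pointwise lower bound
\begin{equation*}
\bigl(P^3 \delta_{(x_0', y_0', \theta_0')}\bigr)(x,y,\theta) \geq \frac{1}{r_2 r_3}\,\1_{B(c(x_0',y_0',\theta_0'); r)}(x,y)\,\1_{|\theta - \theta_0'-\delta\theta|\leq \alpha/2}(\theta),
\end{equation*}
where $c(x_0',y_0',\theta_0')$ is the explicit centre from Lemma~\ref{lem:crecent}, linear in $(x_0',y_0')$ with an $r_1{+}R$ angular-shift contribution.

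Fixing $(x,y,\theta)$, I would next describe the set of admissible $(x_0',y_0',\theta_0')$. The two angular conditions intersect non-trivially precisely when $|\theta - (\theta_0 + \delta\theta)| \leq \alpha$, already identifying $\theta_{**} = \theta_0 + \delta\theta$ and the advertised angular support. For each admissible $\theta_0'$, the $(x_0', y_0')$-constraint is the intersection of two discs of radii $\tilde r$ and $r$; its area is bounded below by a fixed fraction of $\pi r^2$ as soon as the two centres lie sufficiently close, and vanishes once their distance exceeds $\tilde r + r$.

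The spatial centre $(x_{**}, y_{**})$ is then chosen as in the statement, and the verification reduces to showing that for every $(x,y) \in B((x_{**}, y_{**});\tilde r + r/2)$ there is a sub-interval of $\theta_0'$, of length bounded below by a constant depending only on $r_1, r_2, r_3, R, \alpha$, on which the disc overlap is non-trivial. Integrating $\frac{1}{r_2 r_3}\cdot (\text{overlap area})$ over that sub-interval produces the inequality with an explicit $\gamma$, and $F$ is then defined by $F(x_0,y_0,\theta_0) := (x_{**}, y_{**}, \theta_{**})$.

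The main obstacle is the purely geometric step. As $\theta_0'$ sweeps an interval of length up to $\alpha$, the candidate output centres trace an arc of radius $r_1 + R$ based at $(x_0,y_0)$, and one has to show that a ball of radius exactly $\tilde r + r/2$ around $(x_{**}, y_{**})$ sits inside the ``thickened arc'' swept by the discs, uniformly across $\theta$ in the angular support. Smallness of $\alpha$ keeps the arc nearly flat, but tracking the constants so that the claimed radius is achieved and $\gamma > 0$ is uniform in $(x,y,\theta)$ is the delicate part, and parallels the bookkeeping already performed in Lemma~\ref{lem:crecent}.
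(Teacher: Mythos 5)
Your proposal follows essentially the same route as the paper: decompose the indicator data into Dirac masses, apply Lemma \ref{lem:crecent} pointwise, rewrite the resulting condition as an overlap of two discs of radii $r$ and $\tilde r$, bound that overlap area below by a fixed fraction of $\pi r^2$ when the centres are close enough, and then integrate over the admissible angular interval to produce $\gamma$. The ``main obstacle'' you flag at the end is exactly the step the paper handles by restricting $(x,y)$ to the ball of radius $\tilde r + r/2$ about the translated centre and noting the $\theta'$-integrand is then supported on an interval of length bounded below, so your outline is correct and complete in the same sense as the paper's own argument.
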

	\begin{proof}
		We have
		\begin{align*}
			\1_{B((x_0,y_0);\tilde{r})}(x,y) \1_{|\theta-\theta_0|<\frac{\alpha}{2}} (\theta)= \int \delta_{x'}(x) \delta_{y'}(y) \delta_{\theta'}(\theta)  \1_{B((x_0,y_0);\tilde{r})}(x',y')\1_{|\theta'-\theta_0|< \frac{\alpha}{2}} (\theta)\d x' \d y' \d \theta ',
		\end{align*} where $\int $ represents the triple integral, $\int_{\R^2 \times \R^2} \d x' \d y' \int_{-\alpha}^{\alpha} \d \theta'$. 
		Now applying three times $T_t$ and $\tilde L$ yields,
		\begin{align*}
			&P^3_{r_1, r_2, r_3} \1_{B((x_0,y_0);\tilde{r})}(x,y) \1_{|\theta-\theta_0|<\frac{\alpha}{2}} (\theta)\\
			\geq & \int \left(P^3_{r_1, r_2, r_3} \delta_{x'} \delta_{y'}\delta_{\theta'} \right) (x,y,\theta) \1_{B((x_0,y_0);\tilde{r})}(x',y')\1_{|\theta'-\theta_0|<\frac{\alpha}{2}} \d x' \d y' \d \theta ' \\
			\geq	& \frac{1}{r_2 r_3}\int \1_{B((x' +(r_1+R) \cos(\theta'), y'+(r_1+R) \sin(\theta')); r)}(x,y) \1_{B((x_0,y_0); \tilde{r})}(x',y') \1_{|\theta-\theta'-\delta\theta|< \frac{\alpha}{2}} \1_{|\theta'-\theta_0|< \frac{\alpha}{2}} \d x' \d y' \d \theta' \\
			= & \frac{1}{r_2 r_3}\int \1_{B((x -(r_1+R) \cos(\theta'), y-(r_1+R) \sin(\theta')); r)}(x',y') \1_{B((x_0,y_0); \tilde{r})}(x',y') \1_{|\theta-\theta'-\delta\theta|<\frac{\alpha}{2}}  \1_{|\theta'-\theta_0|< \frac{\alpha}{2}}  \d x' \d y' \d \theta' \\
			= & \frac{1}{r_2 r_3} \int  \left | B \left ((x -(r_1+R) \cos(\theta'), y-(r_1+R) \sin(\theta')); r \right ) \cap B \left ((x_0,y_0); \tilde{r} \right ) \right |  \1_{|\theta-\theta'-\delta\theta|<\frac{\alpha}{2}} \1_{|\theta'-\theta_0|< \frac{\alpha}{2}} \d  \theta'
		\end{align*} In the last line above, we fixed $\theta'$ and considered the integral in $x'$ and $y'$ which will measure the size of the overlap between the balls $B((x-(r_1+R) \cos \theta', y-(r_1+R) \sin \theta') ; r)$ and $B((x_0,y_0); \tilde{r})$. 
		
		If $(x, y) \in B((x_0+(r_1+R)\cos \theta', y_0 +(r_1+R) \sin \theta'); \tilde{r} + \frac{r}{2})$ then we can bound the size of the overlap below by $\frac{\pi r^2}{4}$. We recall from Lemma \ref{lem:crecent} that $r = r_2 r_3 \left (1- \cos  (\frac{\alpha}{2})\right )$, so we have
		\begin{multline*} \left | B \left ((x -(r_1+R) \cos(\theta'), y-(r_1+R) \sin(\theta')); r \right ) \cap B \left ((x_0,y_0); \tilde{r} \right ) \right |  \geq \\ \frac{\pi r_2 r_3}{4} \left (1- \cos  \left  ( \frac{\alpha}{2} \right) \right )^2\1_{B((x_0+(r_1+R)\cos \theta', y_0 +(r_1+R) \sin \theta'); \tilde{r} + \frac{r}{2})}(x,y). 
		\end{multline*}
		Therefore the lower bound becomes, denoting $C(r_2,r_3, \alpha) : = \frac{\pi r_2 r_3}{4} \left (1- \cos  \left  ( \frac{\alpha}{2} \right) \right )^2$ (a constant depending on $r_2,r_3, \alpha$),
		\begin{align*}
			P^3_{r_1, r_2, r_3} &\1_{B((x_0,y_0);\tilde{r})}(x,y) \1_{|\theta-\theta_0|<\frac{\alpha}{2}} (\theta)\\
			&\geq   C(r_2,r_3, \alpha)  \int \1_{B \left ((x_0+(r_1+R)\cos \theta', y_0 +(r_1+R) \sin \theta'); \tilde{r} + \frac{r}{2} \right )}(x,y)\1_{|\theta-\theta'-\delta\theta|<\frac{\alpha}{2}} \1_{|\theta'-\theta_0|< \alpha}  \d \theta', \\
			&= C(r_2,r_3, \alpha)  \int \1_{B \left ((x_0  - x, y_0 -y); \tilde{r} +\frac{r}{2} \right )}((r_1+R)\cos(\theta'), (r_1+R) \sin(\theta')) \1_{|\theta-\theta'-\delta\theta|<\frac{\alpha}{2}} \1_{|\theta'-\theta_0|< \alpha}  \d \theta' \\
			&= C(r_2,r_3, \alpha)  \int \1_{ \left |\theta' - \arccos \left ( \frac{y_0-y}{x_0-x}\right ) \right | \leq \arccos \left ( \frac{\tilde{r}+r/2}{r_1 +R} \right )}   \1_{|\theta-\theta'-\delta\theta|<\frac{\alpha}{2}}  \1_{|\theta'-\theta_0|< \alpha} \d \theta' 
		\end{align*}
		Thus,  if we impose that $(x_0-x, y_0 - y) \in B \left ( \left ( (R+r_1)\cos(\theta_0 + \delta\theta), (R+r_1) \sin(\theta_0 + \delta\theta) \right ); \tilde{r} + \frac{r}{4} \right )$ then we can bound the last integral above by 
		\begin{align*}
			\gamma \1_{B \left ((x_0 - (r_1+R)\cos(\theta_0 + \delta \theta), y_0 -(r_1+R) \sin(\theta_0 + \delta\theta));\tilde{r} + \frac{r}{4} \right )} (x,y) \1_{|\theta - \theta_0 - \delta\theta|< \frac{\alpha}{2}}(\theta), 
		\end{align*} where $\gamma$ is a constant depending on $r_1, r_2, r_3$ and is bounded uniformly in terms of upper and lower bounds on $r_1, r_2, r_3$. Roughly we can compute this constant $ \gamma \approx \frac{\pi r_2^2 r_3^2 \alpha^2}{32(r_1+R)}$.
	\end{proof}

	As we iterate the process that moves the centre of the balls, we can find a radius $\hat R >0$ so that the path of centres stays inside $B((x_0,y_0); \hat R)$ forever. We prove this in the following lemma.

	\begin{lem} \label{lem:big_ball}
		Let $k \geq 1$ and consider a sequence of maps $(x_k, y_k, \theta_k) = F^{k}(x_0, y_0, \theta_0)$ where $F$ is defined in Lemma \ref{lem:crecent_ball}. Then there exists a constant $\hat{R}>0$ depending on $r_1, r_2, r_3$ such that $(x_k, y_k) \in B((x_0, y_0);\hat R)$ for every $k \geq 1$.
	\end{lem}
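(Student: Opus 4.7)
The plan is to exploit the explicit recurrence extracted from Lemma \ref{lem:crecent_ball}. Fixing a definite sign in the formula for $\delta\theta$, the map $F$ reads
\begin{align*}
    F(x,y,\theta) = \bigl(x - (r_1+R)\cos(\theta+\delta\theta),\; y - (r_1+R)\sin(\theta+\delta\theta),\; \theta+\delta\theta\bigr),
\end{align*}
so that iteration gives the closed form $\theta_k = \theta_0 + k\delta\theta$ together with
\begin{align*}
    x_k - x_0 = -(r_1+R)\sum_{j=1}^{k}\cos(\theta_0 + j\delta\theta), \qquad y_k - y_0 = -(r_1+R)\sum_{j=1}^{k}\sin(\theta_0 + j\delta\theta).
\end{align*}
Thus the displacement $(x_k-x_0, y_k-y_0)$ is $-(r_1+R)$ times a sum of unit vectors whose angles form an arithmetic progression with common difference $\delta\theta$.

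Next I would pass to complex exponentials. Since
\begin{align*}
    \sum_{j=1}^{k} e^{i(\theta_0 + j\delta\theta)} = e^{i\theta_0} e^{i\delta\theta}\,\frac{e^{ik\delta\theta}-1}{e^{i\delta\theta}-1},
\end{align*}
the modulus of the sum is bounded uniformly in $k$ by
\begin{align*}
    \left|\sum_{j=1}^{k} e^{i(\theta_0 + j\delta\theta)}\right| \leq \frac{2}{|1-e^{i\delta\theta}|} = \frac{1}{|\sin(\delta\theta/2)|}.
\end{align*}
Combining with the displacement formula immediately yields
\begin{align*}
    |(x_k,y_k)-(x_0,y_0)| \leq \frac{r_1+R}{|\sin(\delta\theta/2)|},
\end{align*}
so the lemma holds with $\hat{R} := \dfrac{r_1+R}{|\sin(\delta\theta/2)|}$, which depends only on $r_1, r_2, r_3$ (through $R$ and $\delta\theta$, which are themselves functions of $r_1,r_2,r_3$ and $\alpha$).

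The only delicate point is ensuring $\delta\theta \neq 0$, so that $|\sin(\delta\theta/2)| > 0$ and the geometric sum bound is non-trivial. This follows directly from the formula $\delta\theta = \pm\arctan\!\left(\frac{r_2\sin(\alpha/2)}{r_3+r_2\cos(\alpha/2)}\right)$ in Lemma \ref{lem:crecent}, since $r_2, r_3 > 0$ and $\alpha \in (0,\pi/2)$ force the argument of the arctangent to be strictly positive. Geometrically, this is the intuitive statement that each application of $F$ rotates the velocity direction by a fixed nonzero angle, so the trajectory of centres traces out an orbit that winds around and cannot drift to infinity, much like a discrete analogue of uniform circular motion.
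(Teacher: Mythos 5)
Your proof is correct and is essentially the paper's argument in algebraic form: both start from the closed-form iteration $\theta_k=\theta_0+k\delta\theta$ and the resulting sum of unit vectors at arithmetic-progression angles, and both land on the same constant $\hat R = \frac{r_1+R}{|\sin(\delta\theta/2)|}$. The paper phrases the bounded partial sums geometrically (all iterates lie on a common circle of diameter $\hat R$ through $(x_0,y_0)$), whereas you sum the geometric series in $\mathbb{C}$ — the same computation in different clothing, with your version having the minor virtue of making the nondegeneracy requirement $\delta\theta\neq 0$ explicit.
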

	\begin{proof}
		We have 
		\begin{align*}
			x_k = x_0 - (r_1 + R) \cos(\theta_0 + \delta\theta) - (r_1+R) \cos(\theta_0 + 2 \delta\theta) - \dots - (r_1+R)\cos(\theta_0 + k\delta \theta),
		\end{align*}	
		and $y_k$ is defined similarly. The points $(x_k, y_k)$ are illustrated in \textbf{\em Figure 2} below.
		
		We can then compute the ball that all the points lie on, which is a circle whose radius $\hat R$ and centre $C$ are given by
		\begin{align*}
			\hat R = \frac{r_1+R}{\sin(\delta \theta/2)}, \quad C = (x_0,y_0) - \hat{R} \left ( \cos( \pi/2 - (\theta_0 +\delta \theta/2)), \sin( \pi/2 - (\theta_0 +\delta \theta/2)) \right ).
		\end{align*} This finishes the proof.
	\end{proof}
	
	\fbox{
		\begin{minipage}[c]{0.45\textwidth}
			
			\begin{tikzpicture} \label{fig2}
				\raggedleft
				\draw [thick] (-5,0)-- (0,0) node[midway, below] {$\hat R$};
				\filldraw[black] (0,0) circle (2pt) ;
				\draw (-1,0) arc (180:150:1);
				\draw (-0.85,0.5) arc (150:125:1);
				\draw [thick] (0,0) -- (-4.5,2.5)  node[anchor=east] {$(x_1,y_1)$};
				\filldraw[black] (-4.5,2.5) circle (2pt) ;
				\draw[thick] (-4,2.9) arc (30:60:1)  node[midway, anchor= south west  ] {$\delta \theta$};
				\draw [thick] (0,0) node[anchor=west] {$C$} -- (-5,0) node[anchor= east] {$(x_0,y_0)$} ;
				\filldraw[black] (-5,0) circle (2pt) ;
				\draw [thick] (-5,0) -- (-4.5,2.5) -- (-2.5, 4) 	node[anchor= south east] {$(x_2,y_2)$} ;
				\filldraw[black] (-2.5,4) circle (2pt) ;
				\draw[thick]  (-2.5, 4) -- (0,0);
				\draw [thick] (-2.5,4) -- (-0.8,4.4);
				\draw[thick, dashed] (-4.5,2.5) -- (-4.15, 4) ;
				\draw[thick] (-1.7,4.2) arc (10:30:1)  node[anchor = west] {$\delta \theta$};
				\draw[thick, dashed] (-2.5, 4)-- (-1.5, 4.75);
				\draw[thick, dashed] (-5, 0)-- (-5.5, 1.4);
				\draw (-4.85,0.7) arc (85:108:1) node [midway,anchor= south ] {$\delta \theta$};
			\end{tikzpicture}
		\end{minipage}%
		\begin{minipage}[c]{0.45\textwidth}
			\textbf{\em Figure 2.} This image shows the iterates of the map $F$ in $(x,y)$-space. We notice that each triangle formed by the points $C, (x_k, y_k), (x_{k+1}, y_{k+1})$ is an isosceles triangle and a rotation of the triangle formed by the points $C, (x_0, y_0), (x_1, y_1)$. We can then compute the circle that they all lie on and the point at the centre $C$ using standard tools in trigonometry.
		\end{minipage}
	}
	
	\medskip
	
	In the next lemma, we prove that after $\frac{24\hat R}{r}$ steps, the $x,y$ marginal is bounded by a constant times the indicator function of a ball of radius $\hat R$.
	
	\begin{lem} \label{lem:spacebound}
		If we let $\tilde n  = \big \lceil \frac{24\hat R}{r} \big  \rceil$, then there exists some constant $\tilde \gamma  >0$ such that
		\begin{align} \label{eq:spacebound}
			P^{\tilde n } \delta_{x_0} \delta_{y_0} \delta_{\theta_0} (x,y, \theta)\geq \tilde  \gamma \1_{B((x_0, y_0); \hat R)} (x,y)\ \1_{|\theta-\theta_k| \leq \alpha} (\theta).
		\end{align}
	\end{lem}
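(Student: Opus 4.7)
The plan is to iterate Lemma \ref{lem:crecent_ball} until the lower-bound ball, although centered at the wandering point $(x_k, y_k) = F^k(x_0, y_0, \theta_0)$, has grown large enough to contain all of $B((x_0, y_0); \hat R)$. Concretely, I take the $\tilde n$ applications of $P$ to use the times $r_1, r_2, r_3$ cyclically, so that $P^{\tilde n}$ is obtained by applying $P^3_{r_1, r_2, r_3}$ a total of $\tilde n / 3$ times.

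Lemma \ref{lem:crecent} with $s_i = r_i$ provides the base case
\begin{align*}
P^3_{r_1, r_2, r_3}\, \delta_{x_0} \delta_{y_0} \delta_{\theta_0} (x, y, \theta) \;\geq\; \frac{1}{r_2 r_3}\, \mathds{1}_{B((x_1, y_1); r)}(x, y)\, \mathds{1}_{|\theta - \theta_1| \leq \alpha/2}(\theta),
\end{align*}
where $(x_1, y_1, \theta_1) = F(x_0, y_0, \theta_0)$. Lemma \ref{lem:crecent_ball} then says that each further application of $P^3_{r_1, r_2, r_3}$ enlarges the spatial radius by $r/2$, shifts the center by $F$, preserves the angular half-width $\alpha/2$, and picks up a multiplicative factor $\gamma > 0$ which is uniform across iterations because the times $r_1, r_2, r_3$ are held fixed throughout. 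Iterating $m$ further times yields
\begin{align*}
P^{3(m+1)}_{r_1, r_2, r_3}\, \delta_{x_0} \delta_{y_0} \delta_{\theta_0}(x, y, \theta) \;\geq\; \frac{\gamma^m}{r_2 r_3}\, \mathds{1}_{B((x_{m+1}, y_{m+1});\, r + m r/2)}(x, y)\, \mathds{1}_{|\theta - \theta_{m+1}| \leq \alpha/2}(\theta).
\end{align*}

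By Lemma \ref{lem:big_ball} we have $(x_{m+1}, y_{m+1}) \in B((x_0, y_0); \hat R)$ for every $m$, so the triangle inequality gives the containment $B((x_0, y_0); \hat R) \subset B((x_{m+1}, y_{m+1}); 2 \hat R)$. It therefore suffices to choose $m$ so that $r + m r / 2 \geq 2 \hat R$, i.e.\ $m \geq (4 \hat R - 2 r)/r$. Setting $\tilde n = \lceil 24 \hat R / r \rceil$ gives $\tilde n/3 - 1 \geq (4\hat R - 2r)/r$ with room to spare, which yields the claim with $\tilde \gamma := \gamma^{\tilde n/3 - 1}/(r_2 r_3) > 0$; the tighter angular indicator $\mathds{1}_{|\theta - \theta_k| \leq \alpha/2}$ is clearly dominated by $\mathds{1}_{|\theta - \theta_k| \leq \alpha}$ as in the statement.

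The main obstacle is really just bookkeeping: checking that the multiplicative constant $\gamma$ from Lemma \ref{lem:crecent_ball} can be chosen uniformly over the iteration (which follows from the explicit lower bound $\gamma \approx \pi r_2^2 r_3^2 \alpha^2 /(32(r_1+R))$ involving only the fixed parameters), and that the angular half-width does not degenerate (it stays precisely at $\alpha/2$ at each step). No new technical ingredients are required beyond combining the three preceding lemmas.
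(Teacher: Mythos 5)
Your proposal is correct and follows essentially the same route as the paper: iterate Lemma \ref{lem:crecent_ball} with fixed times $r_1,r_2,r_3$ so the radius grows linearly while Lemma \ref{lem:big_ball} confines the centres to $B((x_0,y_0);\hat R)$, then stop once the radius exceeds $2\hat R$. Your bookkeeping of the uniform constant $\gamma$ and of the angular half-width is in fact slightly more careful than the paper's one-line version.
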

	\begin{proof}
		By Lemma \ref{lem:big_ball} we can choose a path such that $(x_k,y_k) \in B((x_0,y_0); \hat R)$ and writing $k=\frac{\tilde n}{3}$ Lemma \ref{lem:crecent_ball} yields
		\begin{align*}
			P^{\tilde n } \delta_{x_0} \delta_{y_0} \delta_{\theta_0} (x,y, \theta )\geq \gamma^{k} \1_{B \left ((x_k, y_k); \left (1+\frac{k-1}{4} \right )r \right )} (x,y)\1_{|\theta-\theta_k| \leq \alpha}(\theta).
		\end{align*}
		Since we also have that $\left (1+\frac{k-1}{4} \right )r \geq 2 \hat R$, therefore we obtain \eqref{eq:spacebound} with $\tilde \gamma = \gamma^k$. 
	\end{proof}
	
	Next, we look at the angles and prove the following lemma:
	
	\begin{lem} \label{lem:thetabound}
		Let $n_*= \left \lceil \frac{4\pi}{\alpha}  \right \rceil$ and suppose that $t_1 +t_2 + \dots + t_{n_*} \leq l $. Then we have
		\begin{align*}
			P^{n_*}_{t_1, t_2, \dots, t_{n_*}} \big ( \1_{B\big ((0,0); \frac{\hat R}{2} \big )} (x,y)\1_{|\theta-\theta_0|< \alpha} (\theta)\big) \geq \gamma \1_{B\big  ((0,0); \frac{\hat R}{2} - 2 l\big )} , 
		\end{align*} uniform in $\theta$. 
	\end{lem}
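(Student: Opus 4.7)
The first step is to unfold the iterated composition $P^{n_*}_{t_1,\dots,t_{n_*}} = \tilde L T_{t_1}\tilde L T_{t_2}\cdots\tilde L T_{t_{n_*}}$ applied to the test function $g(x',y',\theta')=\1_{B((0,0);\hat R/2)}(x',y')\,\1_{|\theta'-\theta_0|<\alpha}(\theta')$, in exactly the style of the opening computation of the proof of Lemma \ref{lem:crecent}. Iterating the definitions of $\tilde L$ and $T_t$ produces the explicit formula
\begin{align*}
(P^{n_*}_{t_1,\dots,t_{n_*}} g)(x,y,\theta)=\int_{[-\alpha,\alpha]^{n_*}} g(x^*,y^*,\theta^*)\,\d\eta_1\cdots \d\eta_{n_*},
\end{align*}
where $\theta^*=\theta+\eta_1+\cdots+\eta_{n_*}$ and $(x^*,y^*)$ is the position obtained from $(x,y)$ by a chain of unit-speed transports of durations $t_1,\dots,t_{n_*}$ along the intermediate directions. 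Since each transport contributes a displacement of magnitude at most $t_i$, one has $|(x^*,y^*)-(x,y)|\leq \sum_{i=1}^{n_*}t_i\leq l$, irrespective of the $\eta_i$'s.

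Next I restrict to $(x,y)\in B((0,0);\hat R/2-2l)$. The displacement bound then forces $(x^*,y^*)\in B((0,0);\hat R/2-l)\subset B((0,0);\hat R/2)$, so the spatial indicator in $g$ equals $1$ over the entire domain of integration. The problem therefore reduces to proving a strictly positive lower bound, uniform in $\theta$, on
\begin{align*}
I(\theta):=\int_{[-\alpha,\alpha]^{n_*}}\1_{|\theta-\theta_0+\eta_1+\cdots+\eta_{n_*}|<\alpha\;(\mathrm{mod}\,2\pi)}\,\d\eta_1\cdots\d\eta_{n_*}.
\end{align*}

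The remaining step is a reachability argument that exploits the choice $n_*=\lceil 4\pi/\alpha\rceil$. Given any $\theta$, let $\phi\in[-\pi,\pi]$ be a representative of $\theta-\theta_0$ modulo $2\pi$, and consider $\eta_i=-\phi/n_*+s_i$ with $s_i\in[-\alpha/(4n_*),\alpha/(4n_*)]$. The inequality $n_*\alpha\geq 4\pi$ guarantees $|\phi|/n_*\leq \pi/n_*\leq \alpha/4$, so each $\eta_i$ lies in $[-\alpha/2,\alpha/2]\subset[-\alpha,\alpha]$, while $|\eta_1+\cdots+\eta_{n_*}+\phi|\leq\alpha/4<\alpha$. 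Hence the integrand is identically $1$ on a product of $n_*$ intervals of total volume $(\alpha/(2n_*))^{n_*}$, which gives $I(\theta)\geq \gamma:=(\alpha/(2n_*))^{n_*}$, uniformly in $\theta$; combined with the spatial step this is the claimed bound.

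The only nontrivial part is the iterated unfolding, which is bookkeeping-heavy but essentially identical to what has already been done in Lemma \ref{lem:crecent}. The reachability argument for the angular sum is elementary once $n_*\alpha\geq 4\pi$ is in hand: this inequality provides just enough slack to reach any target angle modulo $2\pi$ while keeping every individual jump safely inside $[-\alpha,\alpha]$, which is what activates the angular indicator on a set of positive volume.
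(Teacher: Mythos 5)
Your proof is correct, and it reaches the same two pillars as the paper's argument — (i) the total spatial displacement over the $n_*$ transports is at most $\sum_i t_i \le l$, and (ii) with $n_*\alpha \ge 4\pi$ the angular jumps can reach any target angle modulo $2\pi$ with room to spare — but the execution is genuinely different. The paper decomposes the initial indicator into Dirac masses, factors $P^{n_*}\delta_{x_0}\delta_{y_0}\delta_{\theta_0}$ as $\mu(\theta\mid\theta_0)\,\mu((x,y)\mid\theta,\dots)$, bounds the angular marginal below by an unspecified constant $C$, and then invokes the support of the conditional position law together with translation invariance of the transport to conclude. You instead unfold the composition $\tilde L T_{t_1}\cdots\tilde L T_{t_{n_*}}$ acting directly on the test function, observe that on $B\big((0,0);\tfrac{\hat R}{2}-2l\big)$ the spatial indicator is identically $1$ over the whole domain of integration (in fact your argument already works on the larger ball of radius $\tfrac{\hat R}{2}-l$), and then give an explicit reachability construction ($\eta_i=-\phi/n_*+s_i$ with $|s_i|\le \alpha/(4n_*)$) that yields the concrete constant $\gamma=(\alpha/(2n_*))^{n_*}$. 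This buys you two things the paper's version leaves implicit: it avoids the conditional-law/translation-invariance machinery entirely, and it replaces the paper's unquantified ``$\ge C$'' in the angular step by an explicit positive volume. The only point to state carefully in a final write-up is that the angular indicators must be read modulo $2\pi$, which you already handle by choosing the representative $\phi\in[-\pi,\pi]$.
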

	\begin{proof}
		First, let us look at 
		\begin{align*}
			\int P^{n_*} \delta_{x_0}\delta_{y_0} &\delta_{\theta_0} (x,y,\theta)\d x \d y \\ &= \int \int P^{n_* -1}(\delta_{x_0}\delta_{y_0} \delta_{\theta_0})(x-t_{n_*}\cos(\theta),y - t_{n_*} \sin \theta, \theta+\theta') \1_{|\theta'| \leq \alpha} (\theta')\d x \d y \d \theta' \\
			&= \int \int \int P^{n_* -1}(\delta_{x_0}\delta_{y_0} \delta_{\theta_0}) (x,y, \theta+\theta') \1_{|\theta'| \leq \alpha} (\theta') \d x \d y \d \theta' \mathrm{d}\theta. 
		\end{align*} 
		We can keep iterating this to obtain
		\begin{align*} 
			\int P^{n_*}  \delta_{x_0}\delta_{y_0} \delta_{\theta_0} &(x,y,\theta) \d x \d y 
			\\&= \int \int \cdots \int  \delta_{x_0}\delta_{y_0} \delta_{\theta_0}(\theta+\theta_1 +\theta_2 + \dots + \theta_{n_*}) \1_{|\theta_1| \leq \alpha} \dots \1_{|\theta_{n_*}| \leq \alpha} \d x \d y \d \theta_1 \dots \d\theta_{n_*} \\
			& = \int \delta_{\theta_0}(\theta+\theta_1 +\theta_2 + \dots + \theta_{n_*}) \1_{|\theta_1| \leq \alpha} \dots \1_{|\theta_{n_*}| \leq \alpha} \d \theta_1 \dots \d \theta_{n_*} \\
			& \geq C.
		\end{align*}
		For some constant $C$ that doesn't depend on $\theta_0, \theta$.
		Therefore,
		\begin{align*}
			\int P^{n_*} \delta_{x_0}\delta_{y_0} \delta_{\theta_0}(x,y,\theta) \d x \d y \geq C. 
		\end{align*} Hence we can write
		\begin{align*}
			P^{n_*} \delta_{x_0}\delta_{y_0} \delta_{\theta_0} (x,y, \theta) = \mu(\theta \mid  \theta_0)\mu( (x,y) \mid  \theta, (x_0, y_0, \theta_0))  \geq C \mu( (x,y) \mid \theta, (x_0, y_0, \theta_0)) 
		\end{align*}
		where $\mu( (x,y) \mid  \theta, (x_0, y_0, \theta_0))$ is the conditional law of the position variables given $\theta$ and the initial point $(x_0, y_0)$ so we have that
		\begin{align*}
			\int \mu( (x,y) \mid  \theta, (x_0, y_0, \theta_0)) \d x \d y = 1. 
		\end{align*}
		We can also see that 
		\begin{align*}
			\supp ( \mu( (x,y) \mid  \theta, (x_0, y_0, \theta_0)) \subseteq \1_{B((x_0,y_0);l)} 
		\end{align*}
		as the $x$ and $y$ variables can have travelled a distance of at most $l$. Then
		\begin{align*}
			P^{n_*} \big ( \1_{B((0,0); \hat R )} \1_{|\theta-\theta_*|< \alpha}\big ) \geq \int c \mu( (x,y) \mid \theta, (x', y', \theta')) \1_{B((0,0); \hat R)}(x',y') \1_{|\theta'-\theta_0| < \alpha} \d x' \d y' \d \theta'. 
		\end{align*}
		Then using the translation invariance of the transport map we can write
		\begin{align*}
			\mu((x,y )\mid \theta, (x',y', \theta'))= \mu((-x',-y' ) \mid  \theta, (-x,-y, \theta')),
		\end{align*} so we have
		\begin{align*}
			P^{n_*} \big ( \1_{B((0,0); \hat R)} 1_{|\theta-\theta_*|< \alpha} \big ) &\geq \int \mu((-x',-y') \mid  \theta, (-x, -y, \theta')) \1_{B((0,0); \hat R)}(x',y') \1_{|\theta'-\theta_0| < \alpha} \d x' \d y' \d \theta'  \\ &\geq  C \1_{B((0,0);\hat R-2l)}(x,y). 
		\end{align*}
		Above, we use the fact that if $(x, y) \in B((0,0);\hat R-2l)$ then we will be integrating over the full support of $\mu((-x',-y') \mid \theta, (-x, -y, \theta'))$.
	\end{proof}
	
	Using the previous lemmas, we can prove that the minorisation condition \eqref{M1} for Eq. \eqref{eq:rt} with the angularly dependent tumbling kernel is satisfied. 
	\begin{prp} \label{prop:minorisation_angle_dependent}
		Let $f_0 (x,y, \theta)= \delta_{x_0} \delta_{ y_0} \delta_{\theta_0} (x,y,\theta)$ with $(x_0, y_0) \in B \big ((0,0); \frac{\hat R}{2} \big )$ then, after $ \big  \lceil \frac{24 \hat R}{r} \big \rceil + n_*$ steps, provided that the times are chosen suitably, we have
		\begin{align*}
			f (t,x,y, \theta) \geq C \1_{B \big ((0,0); \frac{\hat R}{4} \big )},
		\end{align*} for some $C$ and $\hat R$ that can be computed explicitly.
	\end{prp}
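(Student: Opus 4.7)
The plan is to chain together the minorisation lemmas established earlier. By Lemma~\ref{lem:duhamel} with $n = N := \tilde n + n_*$, where $\tilde n = \lceil 24\hat R/r\rceil$, it suffices to produce one configuration of tumbling times $(t_1^\star,\ldots,t_N^\star)\subset (0,t)$ for which the integrand is bounded pointwise below by a multiple of $\1_{B((0,0);\hat R/4)}(x,y)$, uniformly in $(x_0,y_0)\in B((0,0);\hat R/2)$. Indeed, as at the end of the proof of Lemma~\ref{lem:crecent}, all the pointwise lower bounds we will use are continuous in the time increments, so the bound persists on a positive-volume neighbourhood of the configuration, which when integrated in the Duhamel formula contributes a positive multiplicative factor.

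\textbf{Construction of the configuration.} I split the $N$ tumblings into two blocks. For the first $\tilde n$ increments I take $\tilde n / 3$ repetitions of a triple $(r_1,r_2,r_3)$ adjusted so that the quantities $r$, $R$, $\delta\theta$, $\hat R$ of Lemmas~\ref{lem:crecent}--\ref{lem:big_ball} take their prescribed values. Lemma~\ref{lem:spacebound} then produces an angle $\theta_{\tilde n}$ and a constant $\tilde\gamma>0$ with
\begin{align*}
P^{\tilde n}\delta_{x_0}\delta_{y_0}\delta_{\theta_0}(x,y,\theta)
\geq \tilde\gamma\,\1_{B((x_0,y_0);\hat R)}(x,y)\,\1_{|\theta-\theta_{\tilde n}|\leq \alpha}(\theta).
\end{align*}
Since the hypothesis $(x_0,y_0)\in B((0,0);\hat R/2)$ gives $B((0,0);\hat R/2)\subseteq B((x_0,y_0);\hat R)$, this weakens to the origin-centred bound $\tilde\gamma\,\1_{B((0,0);\hat R/2)}(x,y)\,\1_{|\theta-\theta_{\tilde n}|\leq \alpha}(\theta)$. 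For the remaining $n_*$ increments I take them all positive with total sum at most $l$ for some $l<\hat R/16$; applying Lemma~\ref{lem:thetabound} to the previous bound yields, uniformly in $\theta$,
\begin{align*}
P^{N}\delta_{x_0}\delta_{y_0}\delta_{\theta_0}(x,y,\theta)
\geq \tilde\gamma\,\gamma\,\1_{B((0,0);\,\hat R/2 - 2l)}(x,y),
\end{align*}
and the choice $l<\hat R/16$ gives $\hat R/2 - 2l > \hat R/4$.

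\textbf{Final transport and main obstacle.} The remaining operator $T_{t-t_N^\star}$ translates the spatial ball by a $\theta$-dependent vector of length $t - t_N^\star$; choosing $t - t_N^\star < \hat R/16$ keeps $B((0,0);\hat R/4)$ inside the translated ball. Feeding this pointwise lower bound into Lemma~\ref{lem:duhamel} and integrating over an $\varepsilon$-neighbourhood of $(t_1^\star,\ldots,t_N^\star)$ yields $f(t,x,y,\theta)\geq C\,\1_{B((0,0);\hat R/4)}(x,y)$, with $C$ essentially of the form $\beta^N(1-\chi)^N e^{-(1+\chi)t}\tilde\gamma\gamma\varepsilon^N$ up to a combinatorial factor. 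The main obstacle is the interdependence of constants: the radii $r$, $R$, $\hat R$ and the factors $\tilde\gamma$, $\gamma$ are all coupled through the choice of $(r_1,r_2,r_3)$ and $\alpha$, and one has to verify that these parameters can be chosen consistently so that the final radius is at least $\hat R/4$, the constant $C$ is strictly positive, and everything remains uniform in $(x_0,y_0)\in B((0,0);\hat R/2)$.
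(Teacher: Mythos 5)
Your proposal is correct and follows essentially the same route as the paper: chain Lemma~\ref{lem:duhamel} with Lemma~\ref{lem:spacebound} (to spread out in space), re-centre the ball at the origin using $(x_0,y_0)\in B((0,0);\hat R/2)$, apply Lemma~\ref{lem:thetabound} (to spread out in angle at small spatial cost), and integrate over a neighbourhood of the chosen jump times. Your explicit treatment of the final transport $T_{t-t_N}$ and the slightly smaller choice of $l$ are minor refinements of bookkeeping the paper absorbs into "times chosen suitably."
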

	
	\begin{proof}
		Suppose that the first $n_{**}$ inter-jump times are within $\varepsilon$ of $r_1, r_2, r_3$ with $\varepsilon$ small enough so that Lemma \ref{lem:crecent} applies and $\gamma$ is chosen such that Lemma \ref{lem:spacebound} holds for any interjump times in this range with constant $\gamma$. Suppose further that the sum of the last $n_*$ jump times is less than $l$. Then, using the previous result we have
		\begin{align*}
			P^{n_{**} + n_*} \delta_{x_0}\delta_{y_0}\delta_{\theta_0} (x,y,\theta)&= P^{n_*} \left( P^{n_{**}} \delta_{x_0}\delta_{y_0} \delta_{\theta_0} \right) (x,y,\theta)
			\\ \mbox{using Lemma \ref{lem:spacebound}} \quad \quad \quad & \geq P^{n_*} \big (  \gamma^{\frac{n_{**}}{3}} \1_{B((x_0, y_0); \hat R)} \1_{\left |\theta - \theta_0 - n_{**}\frac{\delta \theta}{3}\right | \leq \frac{\alpha}{2}}\big )\\
			\mbox{using that $(x_0,y_0) \in  B \big ((0,0); \frac{\hat R}{2} \big )$} \qquad \quad & \geq P^{n_*} \big (  \gamma^{\frac{n_{**}}{3}} \1_{B((0, 0); \frac{\hat R}{2})} \1_{ \left |\theta - \theta_0 - n_{**}\frac{\delta \theta}{3} \right | \leq \frac{\alpha}{2}}\big )\\
			\mbox{using Lemma \ref{lem:thetabound}} \quad \quad \quad & \geq  \gamma \1_{B \left ((0,0); \frac{\hat R}{2}-2l \right )}.
		\end{align*}
		We then take $m = \frac{\hat R}{8}$ and substitute this into Lemma \ref{lem:duhamel} and integrate over the admissible possible jump times and obtain
		\begin{align*}
			f(t,x,v) \geq \hat \gamma \1_{B \big ((0, 0); \frac{\hat R}{4} \big )} 
		\end{align*}
		where $\hat \gamma$ is a constant we could, in principle, compute. This verifies \eqref{M1} for \eqref{eq:rt_2d} with the angularly dependent kernel \eqref{asm:K_1}. 
	\end{proof}

	\subsection{Foster-Lyapunov condition}
	\label{sec:Foster-Lyapunov1}
	In this section, we verify the Foster-Lyapunov condition \eqref{FL1} for Eq. \eqref{eq:rt} for the case \eqref{asm:K_1}. We remark that the minorisation condition in the previous section was given in dimension $d=2$ to keep the exposition clear. The Foster-Lyapunov condition we prove in this section is valid for arbitrary dimensions $d\geq 1$. 
	We start by proving the following lemma. 
	\begin{lem} \label{lem:FL1}
		Suppose that $\kappa$ is a collision kernel \eqref{asm:K_1} satisfying the hypothesis \textbf{\em (H4)} then 
		\begin{align} \label{C_kappa}
			\int_{\S^{d-1}} \kappa(v',v)v'\cdot \nabla_x M(x) \d v' = C_\kappa v \cdot \nabla_x M(x), 
		\end{align} where $C_\kappa \leq 1$ is a constant that only depends on the form of the collision kernel.
	\end{lem}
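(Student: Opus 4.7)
The plan is to exploit the rotational symmetry built into \eqref{asm:K_1}. Since $\kappa_1$ depends only on the angle $\theta$ between $v$ and $v'$, for any rotation $R$ of $\mathbb{R}^d$ that fixes $v$, the change of variables $v' \mapsto R v'$ preserves both the measure on $V_0 \mathbb{S}^{d-1}$ and the value of $\kappa_1(\theta)$. Consequently the vector-valued integral
\begin{equation*}
    I(v) := \int_{\mathbb{S}^{d-1}} \kappa_1(\theta(v,v')) \, v' \, \mathrm{d}v'
\end{equation*}
is invariant under every rotation that fixes $v$, which forces $I(v)$ to be collinear with $v$. Writing $I(v) = C_\kappa(v)\, v$, one then dots the identity in \eqref{C_kappa} with $\nabla_x M(x)$ afterwards and the claimed formula drops out provided $C_\kappa$ is a genuine constant (independent of $v$).

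To identify $C_\kappa$ and show it does not depend on $v$, I would take the inner product of $I(v)$ with $v$ itself and divide by $|v|^2 = V_0^2$: using $v\cdot v' = V_0^2 \cos\theta$,
\begin{equation*}
    C_\kappa = \frac{1}{V_0^2} \int_{\mathbb{S}^{d-1}} \kappa_1(\theta)\, v\cdot v'\,  \mathrm{d}v' = \int_{\mathbb{S}^{d-1}} \kappa_1(\theta)\, \cos\theta \, \mathrm{d}v'.
\end{equation*}
The right-hand side is an integral in the single scalar variable $\theta$ against a rotationally invariant weight on the sphere, so it is independent of the base direction $v$; this justifies treating $C_\kappa$ as a constant determined purely by the shape of $\kappa_1$.

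Finally, to obtain the bound $C_\kappa \leq 1$, I would use $|\cos\theta|\leq 1$ combined with the normalisation $\int_{\mathbb{S}^{d-1}} \kappa_1(\theta)\, \mathrm{d}v' = 1$ coming from the probabilistic nature of $\kappa$. This yields $|C_\kappa| \leq \int \kappa_1(\theta)\, \mathrm{d}v' = 1$, and in particular $C_\kappa \leq 1$. Neither step is a genuine obstacle; the only thing worth double-checking is the hypothesis that $\kappa_1$ is a (non-negative) decreasing function of $|\theta|$, which combined with the fact that $\cos\theta$ is also decreasing in $|\theta|$ on $[0,\pi]$ guarantees the sharper bound $C_\kappa \geq 0$ via a correlation/FKG-type argument—not needed for the statement, but a useful sanity check that $C_\kappa \in [0,1]$ and hence the weight $1/(1-C_\kappa)$ appearing in \eqref{eq:norm1} is well-defined as long as $\kappa_1$ is not a delta mass at $\theta = 0$.
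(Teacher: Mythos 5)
Your proof is correct and follows essentially the same route as the paper: both arguments rest on the rotational symmetry of $\kappa_1(\theta)$ about the axis $v$, which kills the component of $\int \kappa_1(\theta)\, v'\,\d v'$ perpendicular to $v$ (the paper does this by the explicit change of variables $v' = \cos(\theta)v + \sin(\theta)w$, you by an invariance argument followed by projection onto $v$) and identifies $C_\kappa$ as a dimensional constant times $\int \kappa_1(\theta)\cos(\theta)\,\d\theta$. Your derivation of $C_\kappa \leq 1$ from $|\cos\theta|\leq 1$ and the normalisation $\int\kappa(v,v')\,\d v' = 1$ is a welcome addition, since the paper asserts this bound without proof.
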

	\begin{proof} We perform a change of variables and write write $v' = \cos (\theta)  v + \sin (\theta) w$ where $w \in S_v$ ranges over the sphere of dimension $d-2$ lying in the hyperplane perpendicular to $v$. This gives us
		\begin{align*}
			\int_{\S^{d-1}} \kappa(v',v)v'\cdot \nabla_x M(x) \d v'  = 	\int_{-\pi}^{\pi} \int_{S_v} \kappa_1 (\theta) (\cos (\theta) v \cdot \nabla_x M (x)+ \sin (\theta) w \cdot \nabla M(x) ) \d w \d \theta. 
		\end{align*} Since $\kappa_1$ is an even function, integrating first in $\theta$ yields
		\begin{align*}
			|S_v| \Big ( \int \kappa_1 (\theta) \cos (\theta) \d \theta \Big) v \cdot \nabla_x M(x) = |\mathbb{S}^{d-2}| \Big( \int \kappa_1(\theta)\cos (\theta) \d \theta \Big) v \cdot \nabla_x M(x). 
		\end{align*}where $|S_v|$ is the size of $S_v$, similarly for $|\S^{d-2}|$. Therefore we obtain \eqref{C_kappa} with 
		\begin{align*}
			C_\kappa = |\mathbb{S}^{d-2}| \Big( \int \kappa_1(\theta) \cos (\theta)  \d \theta \Big). 
		\end{align*}
	\end{proof}
	
	\begin{lem} \label{lem:FL12}
		Suppose that $\kappa$ is a collision kernel \eqref{asm:K_1} and that hypotheses  \textbf{\em (H2)} and \textbf{\em (H4)} are satisfied. 
		Then we have
		\begin{align*}
			\int_{\S^{d-1}} \kappa(v',v) v' \cdot \nabla_x M(x) \psi(v' \cdot \nabla_x M (x) ) \d v' \geq \tilde{\lambda}(\|\nabla_x M\|_\infty, \kappa ) |\nabla_x M (x)|^b. 
		\end{align*}
		\[ \] 
	\end{lem}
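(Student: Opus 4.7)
The plan is to reduce the inequality to a purely geometric positivity statement about the spherical kernel, and then close with a continuity--compactness argument on $\S^{d-1}\times\S^{d-1}$.

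First I would exploit the sign properties of $\psi$. Since $\psi$ is odd, increasing, and vanishes at the origin (by \textbf{(H1)}), $\psi(m)$ has the same sign as $m$, so $m\psi(m)\geq 0$ on all of $\R$ and in particular the integrand is pointwise non-negative. Setting $B:=V_0\|\nabla_x M\|_\infty$, the scalar $m = v'\cdot\nabla_x M(x)$ satisfies $|m|\leq B$ for every $v'\in\mc V=V_0\S^{d-1}$, so \textbf{(H2)} applied with this $B$ yields a constant $c=c(B)>0$ with
\[
    v'\cdot\nabla_x M(x)\,\psi\bigl(v'\cdot\nabla_x M(x)\bigr) \;\geq\; c\,\bigl|v'\cdot\nabla_x M(x)\bigr|^b.
\]

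Next I would extract the factor $|\nabla_x M(x)|^b$ by introducing the unit vector $\hat n:=\nabla_x M(x)/|\nabla_x M(x)|$ and writing $v' = V_0\hat v'$, which turns $|v'\cdot\nabla_x M(x)|^b$ into $V_0^b|\nabla_x M(x)|^b|\hat v'\cdot\hat n|^b$. Using the cone minorisation $\kappa_1(\theta)\geq\beta\mathds 1_{|\theta|<\alpha}$ from \textbf{(H4)} together with the change of variables $\d v' = V_0^{d-1}\,\d\hat v'$, the left-hand side of the claimed inequality is bounded below by
\[
    c\,\beta\,V_0^{b+d-1}\,|\nabla_x M(x)|^b\,I(\hat v,\hat n), \qquad I(\hat v,\hat n):=\int_{\{\hat v'\in\S^{d-1}:\,\mathrm{angle}(\hat v,\hat v')<\alpha\}}|\hat v'\cdot\hat n|^b\,\d\hat v',
\]
where $\hat v := v/V_0$. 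It then suffices to prove that $I(\hat v,\hat n)\geq c_\alpha>0$ uniformly in $(\hat v,\hat n)\in\S^{d-1}\times\S^{d-1}$.

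The main obstacle is exactly this uniform positivity, and the worst configuration is $\hat n\perp\hat v$: there the integrand vanishes precisely at the centre $\hat v'=\hat v$ of the spherical cap. However, $\{\hat v'\in\S^{d-1}:\hat v'\cdot\hat n=0\}$ is a $(d-2)$-dimensional great sphere sitting inside the $(d-1)$-dimensional spherical cap of opening $\alpha$ around $\hat v$, so $|\hat v'\cdot\hat n|^b$ is strictly positive on a subset of positive surface measure of the cap, hence $I(\hat v,\hat n)>0$ for every $(\hat v,\hat n)$. Since $(\hat v,\hat n)\mapsto I(\hat v,\hat n)$ is continuous on the compact product $\S^{d-1}\times\S^{d-1}$, this pointwise positivity upgrades to a uniform strict lower bound $c_\alpha$. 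Combining the three reductions yields the statement with the explicit constant $\tilde\lambda(\|\nabla_x M\|_\infty,\kappa) = c\,\beta\,V_0^{b+d-1}\,c_\alpha$.
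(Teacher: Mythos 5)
Your proof is correct, but it takes a genuinely different route from the paper's. The paper keeps the full kernel $\kappa_1$, changes variables to reduce the integral to $c\,|\nabla_x M|^b F(\alpha)$ with $F(\alpha)=\int\kappa_1(\theta)|\sin\theta|^{d-2}|\cos(\theta-\alpha)|^b\,\d\theta$ ($\alpha$ the angle between $v$ and $\nabla_x M$), and then locates the minimiser of $F$ explicitly by a first- and second-derivative computation, using the assumption that $\kappa_1$ is decreasing in $|\theta|$ to conclude $F(\alpha)\geq\int\kappa_1(\theta)|\sin\theta|^{d-2+b}\,\d\theta$; this yields an explicit value for $\tilde\lambda$ and requires separate changes of variables for $d=2$ and $d>2$. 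You instead discard everything except the cone minorant $\beta\mathds{1}_{|\theta|<\alpha}$ from \textbf{(H4)} and replace the calculus by a soft argument: the zero set $\{\hat v'\cdot\hat n=0\}$ meets the spherical cap in a set of zero surface measure, so $I(\hat v,\hat n)>0$ pointwise, and continuity plus compactness (or, even more simply, rotation invariance, which makes $I$ a continuous positive function of the single variable $\hat v\cdot\hat n\in[-1,1]$) upgrades this to a uniform bound. Your version buys robustness and dimension-independence — it never uses the monotonicity of $\kappa_1$, only the lower bound in \textbf{(H4)} together with the nonnegativity of $m\psi(m)$ (which already follows from \textbf{(H2)} on the range $|m|\leq V_0\|\nabla_x M\|_\infty$, so the appeal to \textbf{(H1)} is harmless) — at the price of a less explicit constant $c_\alpha$; the paper's version pins down the infimum exactly but leans on the structural hypothesis that $\kappa_1$ decreases in $|\theta|$.
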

	\begin{proof}
		Let us first prove the lemma when $d=2$ which is the application of Theorem \ref{thm:angulardep}. Performing the same change of variables as in Lemma \ref{lem:FL1}, we have
			\begin{multline*}
				\int_{\S^{1}} \kappa(v',v) v' \cdot \nabla_x M(x) \psi(v' \cdot \nabla_x M (x) ) \d v' \\
				=\int_{-\pi}^\pi \kappa_1(\theta) (\cos (\theta)v \cdot \nabla_x M + \sin (\theta) v^\perp \cdot \nabla_x M) \psi (\cos (\theta) v \cdot \nabla_x M + \sin (\theta)  v^\perp \cdot \nabla_x M) \d \theta.
			\end{multline*}
			Then, using the assumption \eqref{asm:m_psi_m} we can bound this integral below by
			\begin{align*}
				c \int_{-\pi}^\pi \kappa_1(\theta) |\cos (\theta) v \cdot \nabla_x M + \sin (\theta) v^\perp \cdot \nabla_x M |^b \d \theta = c |\nabla_x M |^b \int_{-\pi}^\pi \kappa_1(\theta) |\cos(\theta - \alpha)|^b \d\theta,
			\end{align*} where $\alpha$ is the angle between $v$ and $\nabla_x M$. 
			If we write
			\begin{align*}
				F(\alpha) = \int_{-\pi}^\pi \kappa_1(\theta) |\cos(\theta - \alpha)|^b \d \theta,
			\end{align*}
			then we can compute that 
			\begin{align*}
				F'(\alpha) = b\int_{-\pi}^{\pi} \kappa_1(\theta)|\cos^2(\theta - \alpha)|^{\frac{b}{2}-1} \cos(\theta-\alpha)\sin(\theta-\alpha) \d \theta 
			\end{align*} and by changing variables to get $F'(\alpha) = b \int_{-\pi}^\pi \kappa_1(\theta+\alpha) |\cos^2(\theta)|^{\frac{b}{2}-1} \cos(\theta)\sin(\theta) \d \theta$ and then changing variables back we have
			\begin{align*}
				F''(\alpha) = b\int_{-\pi}^\pi \kappa_1'(\theta)|\cos^2(\theta - \alpha)|^{\frac{b}{2}-1} \cos(\theta-\alpha)\sin(\theta-\alpha) \d \theta. 
			\end{align*}
			Therefore, $F(\alpha) = 0$ when $\alpha = \left \{0, \pm \frac{\pi}{2}, \pm \pi \right \}$ and in the case $\alpha = \pm \frac{\pi}{2}$ we have $F''(\alpha) < 0$ as
			\begin{align*}
				F'' \left (\pm \frac{\pi}{2} \right ) = - b \int_{-\pi}^\pi \kappa_1'(\theta) |\sin^2(\theta)|^{b/2-1} \sin(\theta)\cos(\theta) \d \theta,
			\end{align*} which is negative when $\kappa_1'(\theta) \sin(\theta) \geq 0$ for all $\theta$ which will be the case if $\kappa_1$ is a decreasing function of $|\theta|$. Therefore, for all $\alpha$ we have
			\[  F(\alpha) \geq \int_{-\pi}^\pi \kappa_1(\theta)|\sin(\theta)|^b \mathrm{d}\theta. \]
			In the case where we have $d >2$, let us make the change of variables
			\begin{align*}
				v' = \cos (\theta)  v + \sin \theta \cos(\psi) u + \sin(\theta) \sin(\psi) p,
			\end{align*} where $u$ is the unit vector in the direction $\nabla_x{M} - \left(\nabla_x M \cdot v\right) v$ and $p$ is a variable vector that ranges over the sphere of vectors of norm $1$ perpendicular to both $v$ and $u$ (which we call $S_{v,u}$. The Jacobian of this change of variable is $|\sin(\theta)|^{d-2}$. Therefore, we want to find a lower bound for the integral
			\begin{align*}
				\int_{-\pi}^\pi \int_{-\pi}^\pi \int_{S_{v,u}} \kappa_1(\theta) |\sin(\theta)|^{d-2} |\cos (\theta) v \cdot \nabla_x M + \sin(\theta) \cos(\psi) u \cdot \nabla_x M|^b \d p \d \psi \d \theta. 
			\end{align*}
			The integral $\d p$ just gives us a constant factor. To evaluate the rest, we write $\phi_v$ to be the angle between $\nabla_xM$ and $v$. Then we have
			\begin{align*} 
				|\nabla_x M|^b \int_{-\pi}^\pi \int_{-\pi}^\pi &\kappa_1(\theta) |\sin(\theta)|^{d-2} |\cos(\theta) \cos(\phi_v) +\sin(\theta) \sin(\phi_v) \cos(\psi)|^b \d \psi \d \theta\\
				& = |\nabla_xM|^b \int_{-\pi}^\pi (1-\sin^2(\phi_v) \sin^2(\psi))^{b/2}\int_{-\pi}^\pi \kappa_1(\theta)|\sin(\theta)|^{d-2}|\cos(\theta-\alpha(\phi_v, \psi))|^b \d \theta \d \psi\\
				& \geq  |\nabla_x M|^b \left( \int_{-\pi}^\pi |\cos(\psi)|^b \mathrm{d}\psi\right) \inf_{\alpha}\left( \int_{-\pi}^\pi \kappa_1(\theta)|\sin(\theta)|^{d-2} |\cos(\theta - \alpha)|^b \mathrm{d}\theta \right) \\
				& =|\nabla_x M|^b \left( \int_{-\pi}^\pi |\cos(\psi)|^b \mathrm{d}\psi\right) \left( \int_{-\pi}^\pi \kappa_1(\theta)|\sin(\theta)|^{d-2} |\sin(\theta)|^b \mathrm{d}\theta\right).
			\end{align*} In the last line above, we have used similar considerations to the case $d=2$.
	\end{proof}
	
	Now, we can move on to the proof of the Foster-Lyapunov condition \eqref{FL1}. 
	\begin{prp} \label{prp:lyapunovangle}
		If hypotheses \textbf{\em (H1), (H2), (H3)} and \textbf{\em (H4)} are satisfied then we can choose a constant $A$ so that the function 
		\begin{align*}
			\phi (x,v) = \Big ( 1 - \frac{\gamma }{1- C_\kappa}v\cdot \nabla_x M(x) - \gamma A v\cdot \nabla_x M(x) \psi(v \cdot \nabla_x M) \Big )e^{-\gamma M(x)}
		\end{align*} verifies a Foster-Lyapunov condition \eqref{FL1} for Eq. \eqref{eq:rt}.
	\end{prp}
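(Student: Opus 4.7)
The plan is to compute $\mc L^*[\phi]$ term by term for the three summands of $\phi$ and exhibit two successive cancellations: one at leading order in $v\cdot\nabla_x M$ via Lemma \ref{lem:FL1}, and one producing a strictly negative contribution proportional to $|\nabla_x M|^b$ via Lemma \ref{lem:FL12}. The constants $\gamma$ and $A$ will then be chosen small enough that leftover transport errors (all involving $\hess(M)$ or $\psi'$, which either vanish at infinity or are globally bounded by \textbf{(H1)} and \textbf{(H3)}) are dominated.

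First I would write $\phi = (1 + \Phi_1 + \Phi_2) e^{-\gamma M}$ with $\Phi_1 = -\tfrac{\gamma}{1-C_\kappa} v\cdot\nabla_x M$ and $\Phi_2 = -\gamma A (v\cdot\nabla_x M)\psi(v\cdot\nabla_x M)$. Since the tumbling operator annihilates functions independent of $v$, applying \eqref{eq:L_adjoint} to $e^{-\gamma M}$ gives $-\gamma(v\cdot\nabla_x M)e^{-\gamma M}$. Applying it to $\Phi_1 e^{-\gamma M}$, the transport part contributes $-\tfrac{\gamma}{1-C_\kappa}[v\cdot\hess(M)v - \gamma(v\cdot\nabla_x M)^2]e^{-\gamma M}$, while the tumbling part, using Lemma \ref{lem:FL1}, equals $\lambda(v\cdot\nabla_x M)\cdot\gamma(v\cdot\nabla_x M)e^{-\gamma M}$. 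Adding to the $e^{-\gamma M}$ contribution and recalling $\lambda(m)=1-\chi\psi(m)$, the order-one terms in $v\cdot\nabla_x M$ cancel and what remains at this stage is $-\chi\gamma(v\cdot\nabla_x M)\psi(v\cdot\nabla_x M)e^{-\gamma M}$, which is already nonpositive by oddness and monotonicity of $\psi$, but is not coercive since it vanishes whenever $v\perp\nabla_x M$.

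This is where the second corrector $\Phi_2$ comes in. Its tumbling contribution reads
\[
\lambda(v\cdot\nabla_x M)\,e^{-\gamma M}\Bigl[-\gamma A\!\int_{\S^{d-1}}\!\kappa(v,v')(v'\cdot\nabla_x M)\psi(v'\cdot\nabla_x M)\d v' + \gamma A(v\cdot\nabla_x M)\psi(v\cdot\nabla_x M)\Bigr],
\]
and by Lemma \ref{lem:FL12} together with $\lambda\geq 1-\chi$ the integral is bounded below by $\tilde\lambda|\nabla_x M|^b$. Using \textbf{(H3)} we get $|\nabla_x M|^b\geq m_*^b$ outside $B(0,R)$, so this produces the genuinely negative term $-\gamma A(1-\chi)\tilde\lambda\,m_*^b e^{-\gamma M}$ at infinity, which is comparable to $-\phi$ in the relevant region. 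The transport of $\Phi_2 e^{-\gamma M}$ and of $\Phi_1 e^{-\gamma M}$ produce error terms involving $v\cdot\hess(M)v$, $\psi'(v\cdot\nabla_x M)$ and powers of $v\cdot\nabla_x M$, all multiplied by $e^{-\gamma M}$; since $|v|$ is bounded (as $\mc V=V_0\S^{d-1}$) these are all controlled by a constant times $(1+|\nabla_x M|^2)e^{-\gamma M}$, and by $\hess(M)(x)\to0$ the $\hess$-pieces can be made arbitrarily small outside a large ball.

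Choosing $\gamma$ small enough so that $\phi\geq \tfrac12 e^{-\gamma M}$ and $A$ large enough that the $-\gamma A\tilde\lambda(1-\chi)m_*^b e^{-\gamma M}$ term dominates all transport errors for $|x|>R'$ (with $R'$ suitably large), we obtain $\mc L^*[\phi]\leq -\zeta\phi$ outside a compact set; since $\mc L^*[\phi]$ is bounded on that compact set by continuity, we can absorb the discrepancy into an additive constant $D$, yielding \eqref{FL1}. The main obstacle is the bookkeeping: verifying that $A$ can be chosen independently of $\gamma$ so that $\phi\geq 1$ (which forces the sign conditions in \eqref{eq:norm1}) while simultaneously the coefficient in front of $|\nabla_x M|^b e^{-\gamma M}$ beats every transport-error term; this is where the hypothesis $m\psi(m)\in W^{1,\infty}_{\mathrm{loc}}$ and the decay of $\hess(M)$ are both essential.
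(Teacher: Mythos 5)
Your decomposition of $\phi$ and the two cancellations you aim for (Lemma \ref{lem:FL1} killing the order-one term in $v\cdot\nabla_x M$, Lemma \ref{lem:FL12} producing the coercive $-|\nabla_x M|^b$ contribution) are exactly the paper's strategy, and your identification of the leftover $-\gamma\chi\,(v\cdot\nabla_x M)\psi(v\cdot\nabla_x M)e^{-\gamma M}$ after the first cancellation is correct. However, there is a genuine gap in how you close the estimate: you propose to take $A$ \emph{large}, whereas the argument only works with $A$ \emph{small}. The culprit is the loss part of the tumbling operator acting on your second corrector, which you display but then do not control: it contributes $+\gamma A\,\lambda(v\cdot\nabla_x M)\,(v\cdot\nabla_x M)\psi(v\cdot\nabla_x M)e^{-\gamma M}\le \gamma A(1+\chi)(v\cdot\nabla_x M)\psi(v\cdot\nabla_x M)e^{-\gamma M}$, a \emph{positive} term (since $m\psi(m)\ge 0$) that is proportional to the same factor $A$ as your good term $-\gamma A(1-\chi)\tilde{\lambda}\,|\nabla_x M|^b e^{-\gamma M}$ and of comparable size in $|\nabla_x M|$. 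Enlarging $A$ therefore cannot make the good term dominate it, and there is no pointwise reason for $(1-\chi)\tilde{\lambda}|\nabla_x M|^b$ to exceed $(1+\chi)V_0|\nabla_x M|$. The only negative term that matches it structurally is the leftover $-\gamma\chi\,(v\cdot\nabla_x M)\psi(v\cdot\nabla_x M)e^{-\gamma M}$ from the first cancellation; absorbing the former into the latter forces $-\chi+A(1+\chi)\le 0$, i.e.\ $A\le \chi/(1+\chi)$, which is precisely the choice made in the paper. With $A$ small (and then $\gamma$ small) the bound $\phi\ge\tfrac12 e^{-\gamma M}$ also comes for free, the $\gamma^2(v\cdot\nabla_x M)^2$ errors are beaten by shrinking $\gamma$ relative to $A$, and the Hessian errors are beaten by the decay of $\hess(M)$ at infinity, which is a condition on the radius $R_*$ of the excluded ball rather than on the size of $A$; nothing in the proof requires, or tolerates, $A$ large.
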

	
	\begin{proof}
		First, we notice that if $A$ is sufficiently small then
		\begin{align*}
			\phi (x,v) \geq \Big (1- 2 \gamma \Big ( \frac{1}{1-C_\kappa} + \frac{\chi}{1+\chi} \Big ) V_0 \|\nabla_x M\|_\infty \Big )e^{-\gamma M(x)}, 
		\end{align*} so we can choose
		\begin{align*}
			\gamma \leq \frac{1}{4V_0 \|\nabla_x M\|_\infty}\Big ( \frac{1}{1-C_k} + \frac{\chi}{1+\chi} \Big )^{-1} 
		\end{align*} and this ensures
		\begin{align} \label{FL_phi}
			\phi (x,v) \geq \frac{1}{2} e^{-\gamma M(x)}.  
		\end{align}
		Now, we differentiate the separate parts of the Lyapunov function, remembering \eqref{eq:L_adjoint} and using Lemmas \ref{lem:FL1} and \ref{lem:FL12}, 
		\begin{align*}
			\mathcal{L}^*( e^{-\gamma M})= -\gamma v \cdot \nabla_x M e^{-\gamma M},
		\end{align*}
		\begin{multline*}
			\mathcal{L}^* (v \cdot \nabla_x M e^{-\gamma M}) = v^T \hess (M) v e^{-\gamma M} + \gamma(v \cdot \nabla_x M)^2 e^{-\gamma M} \\
			\quad - (1-C_\kappa) v\cdot \nabla_x M e^{-\gamma M} + \chi(1-C_\kappa) \psi(v \cdot \nabla_x M) v\cdot \nabla_x M e^{-\gamma M}, 
		\end{multline*} and 
		\begin{align*}
			\mathcal{L}^*( v \cdot \nabla_x M \psi (v \cdot \nabla_x M)&e^{-\lambda M}) \geq v^T \hess(M) v \left  ( \psi(v \cdot \nabla_x M) + \psi'(v \cdot \nabla_x M)v\cdot \nabla_x M \right ) e^{- \gamma M} \\
			& \quad +\gamma (v \cdot \nabla_x M)^2 \psi(v \cdot \nabla_x M)e^{-\gamma M} - v\cdot \nabla_x M \psi(v \cdot \nabla_x M)(1+\chi)e^{-\gamma M}\\
			& \quad + \tilde{\lambda}(\|\nabla_x M\|_\infty, \kappa_1)(1- \chi) |\nabla_x M|^b.
		\end{align*}
		Now, we can put this all together in a Lyapunov functional for positive $A$ sufficiently small
		\begin{align*}
			\mathcal{L}^* \Big( \Big ( 1 - \frac{\gamma}{1- C_\kappa}
			v\cdot &\nabla_x M - \gamma A v\cdot \nabla_x M \psi(v \cdot \nabla_x M) \Big )e^{-\gamma M} \Big ) \\  \leq  & \, \gamma v^T \hess(M) v e^{-\gamma M} \Big (\frac{1}{1-C_\kappa} + A \Big (1+ \sup_z (z \psi'(z))) \Big ) + \gamma^2 V_0^2 e^{-\gamma M} \Big ( \frac{1}{1-C_\kappa}+A \Big ) \\
			& + v \cdot \nabla_x M \psi(v \cdot \nabla_x M) e^{-\gamma M} \gamma ( -\chi + A (1+ \chi)) -  \gamma A(1-\chi)|\nabla_x M|^b e^{-\gamma M}
		\end{align*}
		Then, choosing $A \leq \frac{\chi}{1+\chi}$ we have
		\begin{multline*}
			\mathcal{L}^* \Big( \Big ( 1 - \frac{\gamma}{1-C_\kappa} v\cdot \nabla_x M - \gamma  A v\cdot \nabla_x M \psi(v \cdot \nabla_x M \Big )e^{-\gamma M} \Big)  \\ 
			\leq \gamma \Big ( C_1V_0^2 |\hess(M)| e^{-\gamma M}  + \gamma C_2 V_0^2 e^{-\gamma M}  - A(1-\chi) |\nabla_x M|^b \Big ) e^{-\gamma M}.
		\end{multline*}
		Now, we know that we can choose $R_*$ such that if $|x|>R_*$ then,
		\begin{align*}
			|\nabla_x M(x)| \geq m_*\quad \mbox{and } \quad C_1 V_0^2 |\hess(M)(x)| \leq A(1-\chi) \frac{m_*^{b}}{4},
		\end{align*}then if we choose $\gamma$ small enough so that $\gamma C_2  V_0^2 \leq A(1-\chi) \dfrac{m_*^{b}}{4}$, then we will have 
		\begin{multline}
			\mathcal{L}^* \Big( \Big ( 1 - \frac{\gamma}{1-C_\kappa} v\cdot \nabla_x M -\gamma  A v\cdot \nabla_x M \psi(v \cdot \nabla_x M \Big )e^{-\gamma M} \Big)  \\ 
			\leq  \gamma \Big ( -\frac{1}{2}A(1-\chi)m_*^b e^{-\gamma M} \1_{|x| > R_*} + C_3 \1_{|x| \leq R_*} \Big ),
		\end{multline} for some constant $C_3$. Finally, using \eqref{FL_phi} we have
		\begin{align*}
			\mathcal{L}^*[\phi] \leq -\gamma A (1-\chi)m_*^b \phi  + C_4. 
		\end{align*} This verifies \eqref{FL1} with $\zeta = \gamma A (1-\chi) m_*^b$ and $D = C_4$. 
	\end{proof}
	
	\begin{proof}[Proof of Theorem \ref{thm:angulardep}] We verify the two hypotheses of Harris's theorem in Propositions \ref{prop:minorisation_angle_dependent} and \ref{prp:lyapunovangle}. The contraction in the $\phi-$weighted total variation norm and the existence of a unique steady state follow by Harris's theorem (Theorem \ref{thm:harris}). 
	\end{proof}
	
	\section{Unbounded velocity spaces}
	\label{sec:unbounded-velocities}
	
	This section is dedicated to the long-time behaviour of the linear run and tumble equation, Eq. \eqref{eq:rt} posed in an unbounded velocity space $\mc V = \R^d$ with a tumbling kernel \eqref{eq:K_Maxwell}. 
	The following two sections are dedicated to verifying the two hypotheses of the subgeometric version of Harris's theorem (Theorem \eqref{thm:subgeo}). At the end of this section, we provide a proof of Theorem \ref{thm:unbounded}.
	
	\subsection{Minorisation condition}
	
	For unbounded velocities, our minorisation part is essentially identical to that in our previous paper \cite{EY21}. For the sake of completeness, we include a proof here. Let us again write $(T_t f) (t,x,v) = f(t,x- vt, v)$ for the transport semigroup and define
	\begin{align*}
		(\hat{L} f) (t,x,v) := \int_{\R^d} f(t, x,u) \d u \1_{|v| \leq V_0} (v).
	\end{align*}
	Then we have the following lemmas.
	\begin{lem}
		There exists a constant $C$ such that 
		\[ f(t,x,v) \geq C e^{-(1+\chi)t} \int_0^t \int_0^s T_{t-s}\hat{L} T_{s-r} \hat{L} T_r f_0 (x,v) \d r \d s. \]
	\end{lem}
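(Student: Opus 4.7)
The plan is to mimic Lemma \ref{lem:duhamel} from the angle-dependent case, exploiting the fact that the Maxwellian tumbling kernel $\kappa(v,v') = \mathcal{M}(v)$ depends only on the post-tumbling velocity, which decouples the gain integral from $v$. First I would rewrite Eq.~\eqref{eq:rt} along characteristics, computing $\partial_t( f(t,x+vt,v))$ and introducing the integrating factor $e^{\Lambda(t,x,v)}$ with $\Lambda(t,x,v) = \int_0^t \lambda(v\cdot\nabla_x M(x+vs))\,\d s$. Using the two-sided bound $1-\chi \leq \lambda \leq 1+\chi$, the hypothesis \textbf{(H1)}, this yields the pointwise Duhamel inequality
\[
f(t,x,v) \geq e^{-(1+\chi)t} T_t f_0(x,v) + (1-\chi)\,\mathcal{M}(v) \int_0^t e^{-(1+\chi)(t-s)}\, T_{t-s}\!\!\left(\int_{\R^d} f(s,\cdot,v')\,\d v'\right)(x)\,\d s.
\]

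The second step is the pointwise lower bound on the Maxwellian on a fixed ball, namely $\mathcal{M}(v) \geq (2\pi)^{-d/2} e^{-V_0^2/2}\, \mathds{1}_{|v|\leq V_0}(v)$, which converts the gain term into $C_1 \hat{L} f_s$ for an explicit constant $C_1$. Dropping the nonnegative pure-transport term on the first iteration and substituting the resulting inequality back into the integral yields the double-integral expression claimed in the lemma, with $C$ a product of $(1-\chi)^2$ and the Maxwellian lower bound.

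The whole argument is essentially bookkeeping: one must choose $V_0$ once and for all, track the two powers of $(1-\chi)$ coming from the two $\lambda$ factors, and observe that $e^{-(1+\chi)(t-s)}e^{-(1+\chi)s} = e^{-(1+\chi)t}$ can be pulled out of the integrals. I do not anticipate any substantive obstacle, in contrast with the angle-dependent case of Lemma \ref{lem:duhamel}: because each Maxwellian jump already populates a whole velocity ball uniformly (up to a multiplicative constant), two iterations suffice and no geometric tracking of multi-jump paths is required.
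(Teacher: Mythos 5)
Your proposal is correct and matches the paper's argument: the paper's proof simply observes that $\kappa_2(v) \geq \tilde C \mathds{1}_{|v|\leq V_0}(v)$ and then invokes the same Duhamel iteration as in \cite{EY21} (the analogue of Lemma \ref{lem:duhamel}), which is exactly the two-step computation you spell out. No gap.
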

	\begin{proof}
		The proof is exactly the same as in \cite{EY21} after observing that there exists some $\tilde C>0$ such that 
		\begin{align*}
			\kappa_2(v) \geq \tilde C \1_{|v| \leq V_0} (v).
		\end{align*}
	\end{proof}
	\begin{lem} \label{lem:M2}
		For every $R_*>0$, we can take $t = 3 +\frac{R_*}{V_0}$ such that any solution of Eq. \eqref{eq:rt} with initial data $f_0 \in \mathcal{P}(\R^d \times \R^d)$ with $\int_{|x|\leq R_*} \int_{B(0; V_0)}f_0(x,v) \d x \d v =1$ satisfies 
		\begin{align}
			f(t, x,v) \geq (1-\chi^2) e^{-(1+\chi)t} \frac{1}{t^d |B(V_0)|} \1_{|x| \leq V_0}(x)\1_{|v| \leq V_0} (v).
		\end{align}
	\end{lem}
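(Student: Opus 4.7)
The plan is to start from the Duhamel lower bound of the previous lemma, compute the explicit action of $T_{t-s}\hat{L}T_{s-r}\hat{L}T_r$ on a Dirac mass, and then identify a set of jump times $(r,s)$ on which the resulting indicator equals $1$ uniformly in the support of $f_0$ and in the target $(x,v)\in B(0;V_0)\times B(0;V_0)$.

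First, for a Dirac initial condition $\delta_{x_0}\delta_{v_0}$, a direct calculation gives
\begin{align*}
T_{t-s}\hat{L}T_{s-r}\hat{L}T_r\,\delta_{x_0}\delta_{v_0}(x,v)
= \mathds{1}_{|v|\le V_0}(v)\,\frac{1}{(s-r)^d}\,\mathds{1}_{|x-(t-s)v-x_0-rv_0|\le (s-r)V_0},
\end{align*}
since the first transport produces a delta at $(x_0+rv_0,v_0)$, the first $\hat L$ integrates out velocity and reinstates the indicator in $v$, the second transport and $\hat L$ then uniformise over the intermediate velocity $v_1\in B(0;V_0)$ (yielding the Jacobian factor $(s-r)^{-d}$), and the final transport shifts the position.

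Next, I would identify the admissible region of jump times. Setting $a=r$ and $b=t-s$, the triangle inequality together with $|x_0|\le R_*$ and $|x|,|v|,|v_0|\le V_0$ gives
\begin{align*}
|x-(t-s)v-x_0-rv_0|\;\le\; R_* + (1+a+b)V_0,
\end{align*}
so the indicator is guaranteed to be $1$ whenever
\begin{align*}
R_*+(1+a+b)V_0 \;\le\;(t-a-b)V_0,\qquad\text{i.e.}\qquad a+b\;\le\;\tfrac12\bigl(t-1-R_*/V_0\bigr).
\end{align*}
For the choice $t=3+R_*/V_0$ this becomes $a+b\le 1$, which in $(r,s)$ coordinates is the triangle $\{0\le r\le 1,\,t-1+r\le s\le t\}$ of area $\tfrac12$. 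On this triangle we also have $s-r=t-a-b\in[t-1,t]$, so $(s-r)^{-d}\ge t^{-d}$.

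Finally, I would substitute these two lower bounds into the Duhamel inequality. For general initial data $f_0$ supported in $\{|x|\le R_*\}\times B(0;V_0)$ with total mass $1$, we integrate the Dirac bound against $f_0(x_0,v_0)\,\mathrm{d}x_0\mathrm{d}v_0$; because the admissible region was chosen so that the indicator equals $1$ uniformly over the support of $f_0$, this integration simply contributes a factor $1$. Collecting the constants from the previous lemma (two collisions weighted by $(1-\chi)$ each and two lower bounds on $\kappa_2$ producing the $|B(V_0)|^{-1}$ factor), we obtain
\begin{align*}
f(t,x,v)\;\ge\; (1-\chi^2)\,e^{-(1+\chi)t}\,\frac{1}{t^d|B(V_0)|}\,\mathds{1}_{|x|\le V_0}(x)\,\mathds{1}_{|v|\le V_0}(v),
\end{align*}
as claimed. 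The only delicate step is the admissibility analysis: one must check that the crude triangle inequality is tight enough to recover the stated time $t=3+R_*/V_0$, and that $(s-r)$ stays bounded away from $0$ on the admissible region so that the Jacobian $(s-r)^{-d}$ does not blow up from below. Both are handled cleanly by the $a+b\le 1$ parametrisation above.
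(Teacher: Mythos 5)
Your proposal is correct and follows essentially the same route as the paper: compute $T_{t-s}\hat{L}T_{s-r}\hat{L}T_r$ on a Dirac mass, use the triangle inequality with $|x_0|\le R_*$, $|v_0|,|v|,|x|\le V_0$ to find the admissible jump times, and note $(s-r)^{-d}\ge t^{-d}$ there; your $a+b\le 1$ triangle simply contains the rectangle $r\le\tfrac12$, $t-s\le\tfrac12$ that the paper integrates over. The only cosmetic difference is bookkeeping of the constant (the $|B(V_0)|^{-1}$ and the area of the time region), which the paper is equally loose about.
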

	\begin{proof}
		We take $f_0 (x,v): = \delta _{(x_0} \delta_{v_0)} $ where $(x_0,v_0) \in\R^d \times B(0,V_0)$, is an arbitrary point with an arbitrary velocity. We only need to consider $x_0 \in B(0, R_*)$, then the bound we obtain depends on $R_*$.
		First, we have that 
		\begin{align*}
			T_rf_0 \geq \delta_{x_0 + rv_0} (x)\delta_{v_0}(v).
		\end{align*}
		Applying $\hat{L}$ to this we obtain
		\begin{align*}
			\hat{L} T_r f_0 \geq \delta_{x_0+rv_0}(x) \1_{|v| \leq V_0} (v).
		\end{align*}
		Performing a change of variables we have 
		\begin{align*}
			\int_{\R^d}  (T_{s-r} \hat{L}  \mathcal T_r f_0 )(x,v) \d v \geq   \frac{1}{(s-r)^d |B(V_0)|} \1_{|x-x_0-rv_0| \leq V_0 (s-r)} (x).
		\end{align*} Therefore we have
		\begin{align*}
			\hat{L} T_{s-r}\hat{L}T_r f_0 \geq    \frac{1}{(s-r)^d |B(V_0)|}  \1_{|x-x_0-rv_0| \leq V_0 (s-r)}  (x) \1_{|v| \leq V_0} (v).
		\end{align*} 
		Applying the transport operator once more we obtain
		\begin{align*}
			T_{t-s} \hat{L}T_{s-r}\hat{L}T_r f_0 \geq  \frac{1}{(s-r)^d |B(V_0)|}  \1_{|x-(t-s)v-x_0-rv_0| \leq V_0 (s-r)}  (x) \mathds{1}_{|v| \leq V_0}(v).
		\end{align*} Moreover, we have
		\begin{align*}
			|x| \leq  (s-r) V_0 - (t-s)V_0 - r V_0 - R_*
		\end{align*} which is implied by $|x-v(t-s)-x_0 -rv_0| \leq (s-r) V_0$ since all the velocities are smaller than $V_0$.
		Then, if we ensure that $(s-r) \geq 2 + \frac{R_*}{V_0}$, $ r \leq \frac{1}{2}$ and that $(t-s) \leq \frac{1}{2}$, we will obtain
		\begin{align*}
			T_{t-s} \hat{L}T_{s-r}\hat{L}T_r f_0 \geq \frac{1}{(s-r)^d |B(V_0)|}  \1_{|x| \leq V_0}  (x)\1_{|v| \leq V_0} (v).
		\end{align*}
		Therefore, setting $t = 3+\frac{R_*}{V_0}$ and restricting the time integrals to $r \in \big (0,\frac{1}{2} \big)$, $s \in \big (\frac{5}{2}+ \frac{R_*}{V_0}, 3 + \frac{R_*}{V_0} \big )$, we obtain
		\begin{align*}
			f(t,x,v)&\geq C\int_{0}^{t} \int_{0}^{s}T_{t-s} \hat{L}T_{s-r}\hat{L}T_r f_0 (x,v)\d r \d s \\
			&\geq C(1-\chi)^2 e^{-(1+\chi)t}  \int_{\frac{5}{2} + \frac{R_*}{V_0}}^{3 + \frac{R_*}{V_0}} \int_{0}^{\frac{1}{2}}\frac{1}{(s-r)^d |B(V_0)|} \1_{|x| \leq V_0} (x)  \1_{|v| \leq V_0} (v)\d r \d s\\
			&\geq  C(1-\chi)^2 e^{-(1+\chi)t}   \frac{1}{t^d |B(V_0)|} \1_{|x| \leq V_0} (x) \1_{|v| \leq V_0} (v).
		\end{align*}
		This gives the uniform lower bound and verifies the minorisation condition \eqref{M2}. We can extend this result from the Dirac delta function initial data to general initial data by using the fact that the associated semigroup is Markov. 
	\end{proof}

	\subsection{Foster-Lyapunov condition}
	\label{sec:Foster-Lyapunov2}
	In this section, we verify the Foster-Lyapunov condition \eqref{FL2} for Eq. \eqref{eq:rt} for the case \eqref{eq:K_Maxwell}. Thus, we prove the following lemma. 
	\begin{lem} If hypotheses \textbf{\em (H1), (H2), (H3), (H5)} are satisfied and assuming that $M \hess (M)$ is bounded, then the function 
		\begin{align*}
			\phi (v, M) = M^2 + 2 v \cdot \nabla_x M M \Big( 1+ \frac{\chi}{1+\chi}\psi(v \cdot \nabla_x M)\Big) + Av^2 
		\end{align*} verifies a weaker Foster-Lyapunov function \eqref{FL2} for a constant $A$ sufficiently large and with computable constants $C>0, \Lambda >0$ so that
		\begin{align*}
			\mathcal{L}^* [\phi](v,M) \leq C - \Lambda \sqrt{\phi(v,M)}. 
		\end{align*}
	\end{lem}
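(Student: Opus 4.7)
The plan is to compute $\mathcal{L}^*[\phi]$ directly from \eqref{eq:L_adjoint} with Maxwellian kernel $\kappa_2 = \mathcal{M}$, exploit a cancellation between the transport term and the loss part of the tumbling operator, and extract coercivity in $|v|$ and $|M|$ from two independent mechanisms. Writing $g := v\cdot\nabla_x M(x)$ and splitting $\phi = M^2 + \phi_2 + A|v|^2$ with $\phi_2 := 2gM\bigl(1+\tfrac{\chi}{1+\chi}\psi(g)\bigr)$, the Maxwellian identities $\int\mathcal{M}\,dv'=1$, $\int v'\mathcal{M}\,dv'=0$, $\int|v'|^2\mathcal{M}\,dv' = d$ make the tumbling action on $M^2$ and the transport action on $A|v|^2$ both vanish, giving $\mathcal{L}^*[M^2] = 2Mg$ and $\mathcal{L}^*[A|v|^2] = \lambda(g)A(d-|v|^2)$.

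The decisive algebraic cancellation is that expanding $\lambda(g)(-\phi_2)$ in the loss part of the tumbling operator yields the leading contribution $-2gM$, which exactly cancels $\mathcal{L}^*[M^2] = 2Mg$. Collecting what remains, $\mathcal{L}^*[\phi]$ decomposes into (i) the nonnegative term $2g^2 + \tfrac{2\chi}{1+\chi}g^2\psi(g) \geq \tfrac{2g^2}{1+\chi}$ coming from transporting $\phi_2$, (ii) the Maxwellian average
\begin{equation*}
\tfrac{2\chi}{1+\chi}M\lambda(g)\, J(\nabla_x M), \qquad J(p) := \int_{\R^d} \mathcal{M}(v')(v'\cdot p)\psi(v'\cdot p)\,dv' \geq 0,
\end{equation*}
(iii) the coercive $|v|^2$-piece $\lambda(g)A(d-|v|^2) \leq -A(1-\chi)|v|^2 + Ad(1+\chi)$, plus lower-order Hessian-weighted error terms. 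Because $\psi$ is odd and strictly increasing, $J(p)>0$ for $p\neq 0$ and depends continuously on $p$, so by \textbf{(H3)} there exists $J_*>0$ with $J(\nabla_x M(x)) \geq J_*$ for $|x|\geq R$; since $M(x)\to -\infty$, this produces a term $\leq -c_1|M|$ outside a large ball.

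The remaining errors $2Mv^T\hess(M)v$ together with its $\psi(g)$- and $\psi'(g)$-weighted analogues are controlled by the extra assumption that $M\hess(M)$ is bounded---precisely the motivation for this hypothesis in the theorem statement---together with the standing bound $\sup_z|z\psi'(z)| < \infty$ (the same quantity used in the proof of Proposition \ref{prp:lyapunovangle}), so that after using $|g|\leq\|\nabla_x M\|_\infty|v|$ each such error is bounded by $C|v|^2$, absorbable into $-(1-\chi)A|v|^2$ when $A$ is chosen large. The cross term $\tfrac{2\chi^2}{1+\chi}gM\psi(g)(1+\psi(g))$ carries the sign of $M$ and has modulus $\leq C|M||v|$, also absorbable by $A$ large. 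Putting the bounds together, outside a sufficiently large ball one obtains $\mathcal{L}^*[\phi] \leq C_0 - c_1|M| - c_2|v|^2$, while $\mathcal{L}^*[\phi]$ is bounded on compact sets. For $A$ large, $\phi + 1 \asymp 1 + M^2 + |v|^2$, hence $\sqrt{\phi+1} \lesssim |M| + |v| + 1$, and since $|v|^2$ dominates $|v|$ outside the unit ball one concludes $\mathcal{L}^*[\phi] \leq C - \Lambda\sqrt{\phi+1}$, verifying \eqref{FL2} with $h(u) = \sqrt{u}$ (strictly concave, positive, increasing, with $h'(u)\to 0$). The main obstacle is the careful bookkeeping of the Hessian- and $\psi'$-weighted remainders and the verification of the positive lower bound $J \geq J_*$, which is the sole mechanism providing the $-|M|$ coercivity needed in a regime without a weight of the form $e^{-\gamma M}$.
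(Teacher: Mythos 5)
Your proof is correct and follows essentially the same route as the paper's: the same decomposition of $\phi$, the same cancellation of the $2Mg$ terms, coercivity in $|v|^2$ from the Maxwellian second moment and coercivity in $|M|$ from the strictly positive Maxwellian average of $(v'\cdot\nabla_x M)\,\psi(v'\cdot\nabla_x M)$ multiplied by $M<0$, with the Hessian-weighted remainders absorbed via the boundedness of $M\hess(M)$ and a large choice of $A$. Your compactness argument for $J(\nabla_x M)\geq J_*$ is just a minor variant of the paper's lower bound $\tilde\lambda(\|\nabla_x M\|_\infty,\kappa_2)$ obtained from \textbf{(H2)} and \eqref{asm:M}, and noting that the $\psi(g)^2$ cross term is signed (hence droppable) is cleaner than trying to absorb it.
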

	\begin{proof}
	In this proof, it is useful to remember that $M$ is negative for $|x|$ sufficiently large. Since it is only defined up to a constant, let us choose $M < 0$. Similar to the previous case, we look at how the adjoint $\mc L^*$ (defined by \eqref{eq:L_adjoint}) acts on different terms. Precisely, we have
			\begin{align*}
				\mathcal{L}^*(M^2)  &= 2v \cdot \nabla_x MM, \\
				\mathcal{L}^*(2v\cdot\nabla_x M M) &= 2 v^T \mbox{Hess}(M) v M + 2 (v \cdot \nabla_x M)^2 -2v \cdot \nabla_x M M - 2 \chi v \cdot \nabla_x M \psi(v \cdot \nabla_x M) |M|
			\end{align*}
			Then we have, for any $c>0$
			\begin{align*}
				\mathcal{L}^* \big (c v \cdot \nabla_x M & \psi(v \cdot \nabla_x M ) M \big )
				\\ = \, &c v^T \hess(M)v M \left (\psi'(v \cdot \nabla_x M) v\cdot \nabla_x M + \psi(v \cdot \nabla_x M) \right )  + c(v \cdot \nabla_x M)^2 \psi(v \cdot \nabla_x M) \\
				& + c(1-\chi\psi(v \cdot \nabla_x M)) M  \left ( \int \kappa_2(v')  v' \cdot \nabla_x M \psi(v' \cdot \nabla_x M) \d v' - v \cdot \nabla_x M \psi(v \cdot \nabla_x M ) \right ) \\
				\leq & \, c \left( \|\hess(M) M\|_{\infty} \| \psi(z)(z)\|_{\text{Lip}} + \|\nabla_x M\|^2_{\infty}\right) |v|^2 \\
				&  - c(1-\chi) \tilde{\lambda}(\|\nabla_x M\|_\infty, \kappa_2)|M| + c (1+\chi) v \cdot \nabla_x M \psi(v \cdot \nabla_x M) |M|.
			\end{align*}
			Summing these up and choosing $c = \frac{2\chi}{1+\chi}$ we have
			\begin{multline*}
				\mathcal{L}^* \big( M^2 +2 v \cdot \nabla_x M M + \frac{2\chi}{1+\chi} v \cdot \nabla_x M \psi(v \cdot \nabla_x M)M \big)  
				\\ \leq \Big( \Big (2 + \frac{2\chi}{1+\chi}\|\psi(z)(z)\|_{\text{Lip}} \Big) \|\hess(M)M\|_{\infty} + \Big(2 + \frac{2\chi}{1+\chi} \Big )\|\nabla_x M\|_{\infty}^2 \Big  )|v|^2  - \frac{2\chi(1-\chi) }{1+\chi} \tilde{\lambda}|M|
			\end{multline*}
			Now we also have
			\begin{align*}
				\mathcal{L}^* (|v|^2) \leq (1+\chi) - (1-\chi)|v|^2. 
			\end{align*}
			Therefore choosing 
			\begin{align*}
				A \geq 1 + \frac{1}{1-\chi} \Big( \Big(2 + \frac{2\chi}{1+\chi}\|\psi(z)(z)\|_{\text{Lip}} \Big) \|\hess(M) M\|_\infty + \Big(2 + \frac{2\chi}{1+\chi} \Big )|\nabla_x M\|^2_{\infty} \Big),
			\end{align*} we have
			\begin{align*}
				\mathcal{L}^* \Big ( M^2 +2 v \cdot \nabla_x M M + \frac{2\chi}{1+\chi} v \cdot \nabla_x M\psi(v \cdot \nabla_x M)&M + A |v|^2 \Big )
				\\ & \leq A(1+\chi) - (1-\chi)|v|^2  - \frac{2\chi(1-\chi) \tilde{\lambda}}{1+\chi} |M|\\
				& \leq C - \Lambda \sqrt{\phi(v,M)}.
			\end{align*}
			Lastly, choosing $A$ sufficiently large ensures that $\phi >0$, and for $A$ sufficiently large, $\phi$ is comparable to $M^2+v^2$. 
	\end{proof}
	\subsection{Subgeometric convergence rates}
	\label{sec:subgeo}
	We can now combine the results of the two previous sections to get a proof of Theorem \ref{thm:unbounded}.
	\begin{proof}[Proof of Theorem \ref{thm:unbounded}]
		We have verified the hypotheses of the subgeometric Harris's theorem with the Foster-Laypunov function being 
		\begin{align*}
			\phi(v, M) = M^2 + 2 v \cdot \nabla_x M M \Big( 1+ \frac{\chi}{1+\chi}\psi(v \cdot \nabla_x M)\Big) + Av^2, 
		\end{align*} and the function $h(t) = \sqrt{t}$. We can, therefore, compute that the function $H^{-1}(t) = \big (\frac{t}{2}+1 \big )^2$ and $h \circ H_h^{-1}(t) = \big (\frac{t}{2} +1 \big )$. Hence, applying the conclusion of Theorem \ref{thm:subgeo} gives the existence of a steady state $f_\infty$ and that
		\begin{multline*}
			\|f_t - f_\infty\|_{TV} \\ \leq C \Big (\frac{t}{2} +1 \Big )^{-2}  \int f_0 \left (M^2 + 2 v \cdot \nabla_x M M \Big( 1+ \frac{\chi}{1+\chi}\psi(v \cdot \nabla_x M)\Big)+ Av^2 \right ) \d x \d v  + C \Big (\frac{t}{2} +1 \Big )^{-1}. 
		\end{multline*} This proves the result.
	\end{proof}

	\section*{Acknowledgements}
	The authors would like to thank Emeric Bouin for helpful discussions and for suggesting to look at unbounded velocity spaces. 
	
	The authors would like to thank the Isaac Newton Institute for Mathematical Sciences for support and hospitality during the programme ``Frontiers in kinetic theory: connecting microscopic to macroscopic scales - KineCon 2022'' when work on this paper was undertaken. This work was supported by EPSRC Grant Number EP/R014604/1. 
	
	J. Evans acknowledges partial support from the Leverhulme Trust, Grant ECF-2021-134.
	H. Yolda\c{s} was partially supported by the Vienna Science and Technology Fund (WWTF) with a Vienna Research Groups for Young Investigators project, grant VRG17-014 (until October 2022).

	\bibliography{Run-and-Tumble}

\end{document}